\newtheorem{defin}{\bf Def\mbox{}inition}[section]
\newtheorem{prop}[defin]{\bf Proposition}
\newtheorem{lem}[defin]{\bf Lemma}
\newtheorem{cor}[defin]{\bf Corollary}
\newtheorem{rem}[defin]{\bf Remark}
\newtheorem*{clai*}{\bf Claim}
\newtheorem*{rem*}{\bf Remark}
\newtheorem*{nota*}{\bf Notation}
\newtheorem*{prop*}{\bf Proposition}
\newcommand{\C}{\mathbb{C}}
\newcommand{\N}{\mathbb{N}}
\newcommand{\Q}{\mathbb{Q}}
\renewcommand{\k}{\mathbf{k}}
\newcommand{\K}{\mathbf{K}}
\newcommand{\Gal}{\mathrm{Gal}(\K / \k)}
\newcommand{\Up}{\Upsilon}
\renewcommand{\O}{\mathcal{O}} % faisceau des fonctions holomorphes
\newcommand{\Ohat}{\hat{\mathcal{O}}} % Series formelles
\newcommand{\D}{\mathcal{D}} % analytic notations
\newcommand{\Dhat}{\hat{D}} % formal notations
\newcommand{\dxi}{\partial _{x_i}}
\newcommand{\dtj}{\partial _{t_j}}
\newcommand{\dx}[1]{\partial _{x_{#1}}}
\newcommand{\dt}[1]{\partial_{t_{#1}}}
\newcommand{\ddx}{\partial _x}
\newcommand{\ddt}{\partial_t}
\newcommand{\dxsur}[2]{\frac{\partial {#1}}{\partial x_{#2}}}
\newcommand{\B}{\mathcal{B}}
\title{Algorithm for computing local Bernstein-Sato ideals}
\author{Rouchdi BAHLOUL \and Toshinori Oaku}
\address{
Universit\'e de Lyon, Univ. Lyon 1,
Institut Camille Jordan, CNRS UMR 5208,
43 Bd du 11 novembre 1918,
F-69622 Villeurbanne, France}
\email{bahloul@math.univ-lyon1.fr}
\address{
Department of Mathematics,
Tokyo Woman's Christian University,
Suginami-ku,
Tokyo 167-8585,
Japan}
\email{oaku@lab.twcu.ac.jp}
\begin{document}

\begin{abstract}
Given $p$ polynomials of $n$ variables over a field $\k$
of characteristic $0$ and a point $a \in \k^n$, we
propose an algorithm computing the local
Bernstein-Sato ideal at $a$. Moreover with the same
algorithm we compute a constructible
stratification of $\k^n$ such that the local Bernstein-Sato
ideal is constant along each stratum. Finally, we present
non-trivial examples computed with our algorithm.
\end{abstract}

%\subjclass[2000]{}
%\keywords{}

\maketitle

%\tableofcontents

\section*{Introduction}

Let $n$ be a positive integer and $\k$ be a field of
characteristic zero and $a=(a_1, \ldots, a_n)$ be a fixed
point in $\k^n$. Let $x=(x_1, \ldots, x_n)$ be a set
of indeterminates.
In this introduction, $A$ shall be
one of the following rings: the polynomial ring $\k[x]$;
the localization $\k[x]_{a}$ of $\k[x]$ at $a$;
the formal power series ring
$\Ohat_{\k^n, a}=\k[[x-a]]:=\k[[x_1-a_1, \ldots, x_n-a_n]]$;
and when $\k=\C$, the ring $\O_{\C^n,a}$ of germs of
complex analytic functions at $a$.
Denote by $\dxi$ the partial differential operator
$\dxsur{}{i}$ and $D_A=A\langle \dx{1}, \ldots, \dx{n}
\rangle$ the ring of differential operators with
coefficients in $A$.

Let $p\ge 1$ be an integer and let us
consider $f=(f_1,\ldots, f_p) \in A^p$.
Denote by $F$ the product $f_1 \cdots f_p$ and let us
introduce a set of indeterminates $s=(s_1, \ldots, s_p)$
then consider the $A[1/F, s]$-free module
\[\mathcal{L}_A= A[1/F, s] \cdot f^s\]
where $f^s$ is seen as $f_1^{s_1} \cdots f_p^{s_p}$.
The set $\mathcal{L}_A$ is naturally endowed with a
$D_A[s]$-module structure.
Indeed, given $g \in A[1/F,s]$, we have:
\[\dxi \cdot g f^s=(\dxsur{g}{i} + g \sum_{j=1}^p
s_j \dxsur{f_j}{i} f_j^{-1}) f^s.\]
A Bernstein-Sato polynomial (or $b$-function) of $f$
with respect to $A$ is a polynomial $b(s) \in \k[s]$ such that
\[b(s) f^s \in D_A[s] \cdot F f^s.\]
Such $b(s)$ form the ideal $\B_A(f)$, the Bernstein-Sato
ideal of $f$ (with respect to A). When $p=1$, the monic generator (if
it is not zero) is called the Bernstein-Sato polynomial of $f$
(with respect to A).

When $f \in \k[x]^p$, $\B_{\k[x]}(f)$ is called the
global Bernstein-Sato ideal and $\B_{\k[x]_a}(f)$ is called
the local Bernstein-Sato ideal at $a \in \k^n$. It is
well-known that $\B_{\k[x]_a}(f)$ is equal to
$\B_{\Ohat_{\k^n, a}}(f)$ and in the case where $\k=\C$,
it is equal to $\B_{\O_{\C^n, a}}(f)$
(see Brian\c{c}on, Maisonobe \cite{bm90} and the general
arguments in Mebkhout, Narv\`aez-Macarro \cite[4.2]{meb-nar}
for these equalities).
When $f \in \O_{\C^n,a}$, $\B_{\O_{\C^n,a}}(f)$ is called
the analytic Bernstein-Sato ideal of $f$, and it is equal
to $\B_{\Ohat_{\C^n,a}}(f)$.
Finally, when $f \in \Ohat_{\k^n, a}$, we call
$\B_{\Ohat_{\k^n,a}}(f)$ the formal Bernstein-Sato ideal
of $f$ (at $a$).

Let us recall some historical facts.
The global or analytic or local Bernstein-Sato ideal is not
zero (see Bernstein \cite{bernstein} and Sabbah \cite{sabbah}
and also \cite{b-compos}). When $p=1$, the formal
$b$-function is not zero (see Bj\"ork \cite{bjork}).
However for $p\ge 2$, it is still an open question to know
whether $\B_{\Ohat_{\k^n, a }}(f)$ is zero or not.

When $f$ is a polynomial mapping,
a general algorithm for the global Bernstein-Sato ideal
was given by Oaku and Takayama \cite{ot}
(see also Bahloul \cite{b-JSC} and Brian\c{c}on,
Maisonobe \cite{bm02}). When $p=1$, Oaku \cite{oaku} gave
an algorithm for the local $b$-function (see also the recent
work by H. Nakayama \cite{nakayama}).
The first goal of the present paper is, given $f\in \k[x]^p$,
to present an algorithm for computing $\B_{\k[x]_a}(f)$
for $p \ge 1$.
Secondly, by works of Brian\c{c}on and Maisonobe \cite{bm02}
we know that there exists a constructible stratification of
$\k^n$ such that for $a$ running over a given stratum the local
Bernstein-Sato ideal at $a$ is constant. With our
algorithm, we obtain such a stratification.

In section 1 we present all the main results. For the sake of
clarity, all the proofs are postponed to section 2. Section 3
contains non-trivial examples treated with our algorithm.
For example we treat $f=(x^3+y^2,x^2+y^3)$ which was intractable
until today. We also treat $f=(z, x^4+y^4+2zx^2y^2)$ taken from
Brian\c{c}on and Maynadier \cite{bmay}. They proved
that $\B_{\C[x,y,z]_0}(f)$ is not principal by general geometric
arguments without computing it.

\section{Statement of the main results and description
of the algorithm}

In this section we shall recall the main lines of the
(known) algorithm for the global Bernstein-Sato ideal
(\cite{ot} and \cite{bm02}). At the same time we shall
describe our local algorithm emphasizing the common points
and the differences with the global case.

Let us fix a polynomial mapping $f \in \k[x]^p$.
We are interested in $\B_{\k[x]_a}(f)$.
As we recalled, the formal, analytic and local
Bernstein-Sato ideals of $f$ at $a$ are all the same, so we
shall use the notation: $\B_{loc,a}(f)= \B_{\k[x]_a}(f)$. This
will be compared with the global Bernstein-Sato ideal
$\B_{glob}(f)=\B_{\k[x]}(f)$.

Moreover, we shall use the following notations:
$D=\k[x]\langle \ddx \rangle$, $D_a=\k[x]_a\langle \ddx \rangle$,
$\Dhat_a=\Ohat_{\k^n, a}\langle \ddx \rangle$,
and when $\k=\C$, $\D_a=\O_{\C^n, a}\langle \ddx \rangle$.

Following Malgrange \cite{malgrange}, let us introduce
new variables $t_1,\ldots, t_p$ together with the associated
partial derivation operators $\dt{1}, \ldots, \dt{p}$; then
let us consider the ring
$\Dhat_a\langle t, \ddt \rangle=\Dhat_a \otimes_\k
\k[t]\langle \ddt \rangle $
where $t=(t_1,\ldots, t_p)$ and $\ddt=(\dt{1}, \ldots,
\dt{p})$. We then have the subrings
$D\langle t, \ddt \rangle$, $D_a\langle t, \ddt \rangle$
and (when $\k=\C$) $\D_a \langle t, \ddt \rangle$.

The free module $\mathcal{L}_{\Ohat_{\k^n, a}}=
\Ohat_{\k^n,a}[1/F,s] f^s$ has a
$\Dhat_a \langle t, \ddt \rangle$-module structure:
\[t_j \cdot g(s) f^s= g(s_1, \ldots, s_j+1, \ldots, s_p)
f_j f^s\]
and
\[\dt{j} \cdot g(s) f^s= -s_j g(s_1, \ldots, s_j-1,
\ldots, s_p) f_j^{-1} f^s\]
where $g(s) \in \Ohat_{\k^n,a}[1/F, s]$. It also has
a module structure over the subrings of $\Dhat_a \langle t,\ddt
\rangle$. Moreover, we have that $-\dtj t_j$ acts on
$\mathcal{L}_{\Ohat_{\k^n, a}}$ as $s_j$. Thus we shall
identify $s_j$ with $-\dtj t_j$ and the rings
$D[s]$, $D_a[s]$, $\D_a[s]$ and $\Dhat_a[s]$ shall be
seen as subrings of $\Dhat_a \langle t, \ddt \rangle$.

Let us consider the following elements:

\begin{equation}\label{eq:I}
t_j - f_j, \ j=1,\ldots, p\ ; \  \ \dxi+
\sum_{j=1}^p \dxsur{f_j}{i} \dtj , \ i=1, \ldots, n. 
\end{equation}
One can easily check that these elements annihilate $f^s$,
in fact we have:

\begin{lem}\label{lem}
The annihilating ideals\footnote{In a non commutative ring,
ideal shall always mean left ideal.}
$\mathrm{ann}_{\Dhat_a\langle t, \ddt \rangle}(f^s)$,
$\mathrm{ann}_{\D_a\langle t, \ddt \rangle}(f^s)$,
$\mathrm{ann}_{D_a\langle t, \ddt \rangle}(f^s)$,
$\mathrm{ann}_{D\langle t, \ddt \rangle}(f^s)$
are all generated by the elements in (\ref{eq:I}).
\end{lem}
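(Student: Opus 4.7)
The plan is as follows. Let $R$ denote one of the four rings in the statement, $A$ its ring of coefficients (one of $\k[x]$, $\k[x]_a$, $\O_{\C^n,a}$, $\Ohat_{\k^n,a}$), and $I$ the left ideal of $R$ generated by the operators in (\ref{eq:I}). The inclusion $I \subseteq \mathrm{ann}_R(f^s)$ is a direct computation using the module structure recalled before the lemma. For the reverse inclusion I will produce, uniformly for the four rings, a normal form modulo $I$ landing in $A[\ddt]$, then show that the map $A[\ddt] \to \mathcal{L}_A$, $Q \mapsto Q\cdot f^s$, is injective.

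For the normal form, I use the PBW basis $\ddx^\alpha\ddt^\delta t^\beta$ of $R$ with coefficients in $A$ on the left. An easy induction from $t_j - f_j \in I$ shows $t^\beta - f^\beta \in I$; since $I$ is a left ideal and $t^\beta$ sits on the right of a PBW monomial, every $P \in R$ satisfies $P \equiv \sum c_{\alpha\delta\beta}(x)\,\ddx^\alpha f^\beta \ddt^\delta \pmod I$, using that $\ddt$ commutes with $A$. Rewriting $\ddx^\alpha f^\beta$ in PBW form gives $P \equiv \sum c'_{\alpha'\delta}(x)\,\ddx^{\alpha'}\ddt^\delta \pmod I$. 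Then the relation $\ddx_i \equiv -\sum_j (\partial f_j/\partial x_i)\dtj \pmod I$ reduces each $\ddx^{\alpha'}$ to an element of $A[\ddt]$ modulo $I$ by induction on $|\alpha'|$: given $\ddx^{\alpha'-e_i} \equiv h \in A[\ddt] \pmod I$, one writes $\ddx^{\alpha'} \equiv \ddx_i h = h\ddx_i + [\ddx_i, h] \pmod I$ and substitutes once more for the remaining $\ddx_i$, producing an element of $A[\ddt]$. Thus $P \equiv Q(x,\ddt) \in A[\ddt] \pmod I$.

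For the injectivity step, iterating the action formula for $\dtj$ given in the text yields $\ddt^\delta f^s = c_\delta(s)\,f^{-\delta}\,f^s$, where $c_\delta(s) = (-1)^{|\delta|}\prod_j s_j(s_j-1)\cdots(s_j-\delta_j+1)$ is a polynomial in $s$ of total degree $|\delta|$ with leading monomial $(-1)^{|\delta|}s^\delta$. Given $Q = \sum_\delta Q_\delta(x)\ddt^\delta$ with $Q\cdot f^s = 0$, I clear denominators by multiplying by $F^N$ for $N$ larger than every $\delta_j$ in the support, obtaining $\sum_\delta Q_\delta(x)\,c_\delta(s)\prod_j f_j^{N-\delta_j} = 0$ in $A[s]$. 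Extracting the component of maximal total degree in $s$ and comparing coefficients of $s^\delta$ for $|\delta|$ maximal forces $Q_\delta(x)\prod_j f_j^{N-\delta_j} = 0$; since $A$ is an integral domain and the $f_j$ are nonzero in $A$, this gives $Q_\delta = 0$, and a descending induction on total degree finishes.

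Combining the two steps, the composite $A[\ddt] \twoheadrightarrow R/I \twoheadrightarrow R/\mathrm{ann}_R(f^s) \hookrightarrow \mathcal{L}_A$ is injective, which forces both surjections to be bijective, so $I = \mathrm{ann}_R(f^s)$. The argument is uniform in the four rings because only the integrity of $A$ and the nonvanishing of the $f_j$ in $A$ are used. The delicate point I expect is the organization of the rewriting in the normal form step: placing $t^\beta$ on the right in the PBW order is what makes the substitution $t^\beta \mapsto f^\beta$ clean given that $I$ is only a left ideal, and inducting on $|\alpha'|$ in the $\ddx$-reduction is what avoids cascading commutator corrections.
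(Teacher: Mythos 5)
Your proof is correct and follows the same two-step strategy as the paper's: reduce an arbitrary annihilator modulo the generators in (\ref{eq:I}) to a normal form in $A[\ddt]$, then show the action of $A[\ddt]$ on $f^s$ is injective by examining leading $s$-degrees after clearing denominators. The paper states both steps very tersely (``Modulo the elements in (\ref{eq:I}) we may assume $P\in\k[[x]][\ddt]$'' and ``thus all the terms in the sum are zero''); you have simply supplied the PBW rewriting and the leading-coefficient argument that justify them.
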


\noindent
Let us introduce the following ideals:
\begin{itemize}
\item
$I=\mathrm{ann}_{D\langle t, \ddt \rangle}(f^s)$,\\
$J=\mathrm{ann}_{D_a\langle t, \ddt \rangle}(f^s)$.
\item
$I_1= I \cap D[s] \subset D[s]$,\\
$J_1=J \cap D_a[s] \subset D_a[s]$.
\item
$I_2 = (I_1 + D[s] \cdot F) \cap \k[x,s] \subset
\k[x_1, \ldots, x_n, s_1, \ldots, s_p]$,\\
$J_2 = (J_1 + D_a[s] \cdot F) \cap \k[x]_a[s] \subset
\k[x_1, \ldots, x_n]_a[ s_1, \ldots, s_p]$.
\item
$I_3=I_2 \cap \k[s] \subset \k[s_1, \ldots, s_p]$,\\
$J_3=J_2 \cap \k[s] \subset \k[s_1, \ldots, s_p]$.
\end{itemize}

Obviously, $I_1$ (resp. $J_1$) is the annihilator of
$f^s$ in $D[s]$ (resp. in $D_a[s]$).

\begin{prop}\label{p1}
\[I_3=\B_{glob}(f) \qquad \text{and}\qquad J_3=\B_{loc,a}(f).\]
\end{prop}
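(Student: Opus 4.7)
The strategy is a direct definition chase, whose only substantive input is Lemma~\ref{lem}. I would treat the global and local statements in parallel, since the formal structure of the two arguments is identical.

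\medskip

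\noindent\textbf{Step 1 (identifying $I_1$ and $J_1$ with annihilators in $D[s]$ and $D_a[s]$).}
Since $s_j$ has been identified with $-\dtj t_j$, the rings $D[s]$ and $D_a[s]$ are subrings of $D\langle t, \ddt\rangle$ and $D_a\langle t, \ddt\rangle$ respectively. Hence
\[
I_1 = I \cap D[s] = \mathrm{ann}_{D\langle t,\ddt\rangle}(f^s) \cap D[s] = \mathrm{ann}_{D[s]}(f^s),
\]
and likewise $J_1 = \mathrm{ann}_{D_a[s]}(f^s)$. This is where Lemma~\ref{lem} does its work in my plan: it is the lemma that guarantees the annihilator in the larger ring is nontrivial and well understood, although for this proposition I only need the formal equality of intersections.

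\medskip

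\noindent\textbf{Step 2 (global case).}
Given $b(s) \in \k[s]$, I would argue that the following are equivalent:
\begin{enumerate}
\item[(i)] $b(s) \in \B_{glob}(f)$, i.e.\ $b(s) f^s \in D[s] \cdot F f^s$;
\item[(ii)] there exists $P \in D[s]$ with $(b(s) - PF) f^s = 0$, i.e.\ $b(s) - PF \in \mathrm{ann}_{D[s]}(f^s) = I_1$;
\item[(iii)] $b(s) \in I_1 + D[s] \cdot F$.
\end{enumerate}
The passage (i)$\Leftrightarrow$(ii) is immediate from Step 1, and (ii)$\Leftrightarrow$(iii) is the definition of the sum. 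Since $b(s) \in \k[s] \subset \k[x,s]$, condition (iii) is the same as $b(s) \in (I_1 + D[s] \cdot F) \cap \k[x,s] \cap \k[s]$, which telescopes to $I_2 \cap \k[s] = I_3$.

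\medskip

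\noindent\textbf{Step 3 (local case and the main obstacle).}
The proof of $J_3 = \B_{loc,a}(f)$ is carried out by the same chain, replacing $D$ by $D_a$, $\k[x]$ by $\k[x]_a$, and invoking the second half of Lemma~\ref{lem} in place of the first. Because the truly substantive equality $\B_{\k[x]_a}(f) = \B_{loc,a}(f)$ (the coincidence of local, analytic, and formal Bernstein-Sato ideals) has already been recalled in the Introduction, nothing more is needed here. I therefore expect no genuine obstacle: the proposition is essentially a translation between annihilator-intersection data and the ideal-membership definition of $\B(f)$, and the work was done in stating Lemma~\ref{lem} correctly. The actual novelty lies in the algorithmic computability of $I_3$ and $J_3$ by Gr\"obner or standard basis techniques, which is a question for later sections rather than for this proof.
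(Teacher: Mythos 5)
Your argument is correct and is essentially identical to the paper's proof: both reduce to the observation (stated just before Proposition~\ref{p1}) that $I_1$ and $J_1$ are the annihilators of $f^s$ in $D[s]$ and $D_a[s]$, and then unwind the definitions $b(s)f^s = PFf^s \iff b(s)-PF \in \mathrm{ann}(f^s) \iff b(s)\in I_1 + D[s]F$, intersecting with $\k[s]$ at the end. The only quibble is a small slip in your Step 3: the equality $\B_{loc,a}(f)=\B_{\k[x]_a}(f)$ is a definition, not the substantive coincidence of local/analytic/formal ideals, so nothing from the Introduction is being invoked there; and Lemma~\ref{lem} is not actually used in this proof (you note this yourself), since $I_1=\mathrm{ann}_{D[s]}(f^s)$ is immediate from $I_1=I\cap D[s]$.
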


In both cases (global and local) we start with the ``same''
ideals $I$ and $J$ (``same'' means that $J$ and $I$ admit
a common set of generators); then we construct in parallel
the ideals $I_k$ and $J_k$, with $k=1,2,3$, and finally get
the global Bernstein-Sato ideal $I_3=\B_{glob}(f)$
and the local Bernstein-Sato ideal $J_3=\B_{loc,a}(f)$.
It is natural to ask whether $I_k$ and $J_k$ are the
``same'' (in the above sense). Here is the beginning
of the answer.

\begin{prop}\label{p2}
We have:
\begin{itemize}
\item
$J_1= D_a[s] \cdot I_1$,
\item
$J_2=\k[x]_a[s] \cdot I_2$.
\end{itemize}
\end{prop}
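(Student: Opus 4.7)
The main input is Lemma~\ref{lem}: $I$ and $J$ are generated by the \emph{same} list of operators~(\ref{eq:I}). Combined with the fact that the multiplicative set $S := \k[x] \setminus \mathfrak{m}_a$ (with $\mathfrak{m}_a$ the maximal ideal at $a$) is a left Ore subset of both $D[s]$ and $D\langle t, \ddt\rangle$, this identifies $D_a[s]$ and $D_a\langle t, \ddt\rangle$ with the corresponding left Ore localizations. In particular every $P \in D_a[s]$ admits a representation $P = b^{-1} P'$ with $b \in S$ and $P' \in D[s]$, and finitely many such elements share a common left denominator $b \in S$; the naive choice $b = \prod_i b_i$ already works, since the individual denominators lie in the commutative ring $\k[x]$.

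The first equality should follow in two steps. The inclusion $D_a[s] \cdot I_1 \subseteq J_1$ is automatic from $I_1 \subseteq I \subseteq J$ and the fact that $J_1$ is a left ideal of $D_a[s]$. For the reverse, take $P \in J_1$ and write $P = b^{-1} P'$ as above; the relation $P \cdot f^s = 0$ in $\mathcal{L}_{\Ohat_{\k^n, a}}$, with $b$ acting invertibly on the module, gives $P' \cdot f^s = 0$, whence $P' \in I \cap D[s] = I_1$ and $P \in D_a[s] \cdot I_1$.

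For the second equality I would reduce to the first. Setting $K := I_1 + D[s]\cdot F \subseteq D[s]$, the first part then upgrades to $J_1 + D_a[s]\cdot F = D_a[s]\cdot K$, so $J_2 = D_a[s]\cdot K \cap \k[x]_a[s]$ while $I_2 = K \cap \k[x,s]$. The inclusion $\k[x]_a[s]\cdot I_2 \subseteq J_2$ is again immediate. For the converse, given $P \in J_2$, write $P = \sum_i R_i Q_i$ with $R_i \in D_a[s]$ and $Q_i \in K$; clearing the $R_i$'s by a common $b \in S$ gives $bP \in D[s]\cdot K \subseteq K$, since $K$ is a left ideal of $D[s]$. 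Separately, since $P \in \k[x]_a[s]$, some $c \in S$ satisfies $cP \in \k[x,s]$. Because $b,c \in \k[x]$ commute with each other and with $P$, and because $K$ absorbs left multiplication by $\k[x]$, the product $bcP$ lies in $K \cap \k[x,s] = I_2$, so $P = (bc)^{-1}(bcP) \in \k[x]_a[s]\cdot I_2$. The main subtlety is juggling the two kinds of denominators, namely non-commutative Ore ones in $D_a[s]$ versus commutative ones inside $\k[x]_a[s]$; they fit together cleanly because all denominators live in $\k[x]$ and the relevant ideals $I_1$ and $K$ are stable under left multiplication by $\k[x]$.
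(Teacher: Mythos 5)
Your proof is correct and takes essentially the same approach as the paper's: both reduce to clearing commutative denominators $b \in \k[x]$ with $b(a) \neq 0$, using that every element of $D_a[s]$ (resp.\ $\k[x]_a[s]$) has the form $b^{-1}P'$ with $P'$ in $D[s]$ (resp.\ $\k[x,s]$). The only cosmetic difference is in the first equality, where the paper writes $J_1 = (D_a\langle t, \ddt\rangle \cdot I)\cap D_a[s]$ via Lemma~\ref{lem} and clears denominators there, while you work directly with the equivalent description $J_1 = \mathrm{ann}_{D_a[s]}(f^s)$ and pass $b$ through the module action.
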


This proposition says that the global and the local constructions
coincide (in the above sense) at least up to step $2$.
Going from $I_2$ to $I_3$ consists in a usual elimination.
However, going from $J_2$ to $J_3$ is different. Here it is:

\begin{prop}\label{p3}
Let $\Up$ be an ideal in $\k[x,s]$.
Let $\Up=\Up_1 \cap \cdots \cap \Up_r$
be a primary decomposition of $\Up$.
Let $\sigma_a \subset \{1,\ldots, r\}$ be such that

$i \in \sigma_a \iff a \in V(\Up_i \cap \k[x])$ (here $V( \cdot )$
means ``the zero set of''). Then,
\begin{itemize}
\item
if $\sigma_a =\emptyset$ then
$(\k[x]_a[s] \cdot \Up)\cap \k[s]= \k[s]$,
\item
otherwise: $(\k[x]_a[s] \cdot \Up) \cap \k[s]=
(\bigcap_{i \in \sigma_a} \Up_i) \cap \k[s]$.
\end{itemize}
\end{prop}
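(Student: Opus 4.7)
The plan is to reduce this to the standard behavior of primary decomposition under localization. Set $S = \k[x] \setminus \mathfrak{m}_a$, viewed as a multiplicative subset of $\k[x,s]$, so that $\k[x]_a[s] = S^{-1}\k[x,s]$ and $\k[x]_a[s]\cdot \Up = S^{-1}\Up$.

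First I would translate the defining condition for $\sigma_a$ into a localization statement. Let $\mathfrak{p}_i = \sqrt{\Up_i}$ be the associated prime of $\Up_i$. Then
\[\sqrt{\Up_i \cap \k[x]} = \sqrt{\Up_i} \cap \k[x] = \mathfrak{p}_i \cap \k[x],\]
so $a \in V(\Up_i \cap \k[x])$ if and only if $\mathfrak{p}_i \cap \k[x] \subseteq \mathfrak{m}_a$, if and only if $\mathfrak{p}_i \cap S = \emptyset$. Equivalently, $i \in \sigma_a$ iff $S^{-1}\Up_i$ is a proper ideal of $S^{-1}\k[x,s]$ (and in that case is still $S^{-1}\mathfrak{p}_i$-primary); for $i \notin \sigma_a$, some element of $\Up_i \cap S$ becomes a unit after localization, so $S^{-1}\Up_i = S^{-1}\k[x,s]$.

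Next, since localization commutes with finite intersections,
\[S^{-1}\Up = \bigcap_{i=1}^r S^{-1}\Up_i = \bigcap_{i \in \sigma_a} S^{-1}\Up_i.\]
If $\sigma_a = \emptyset$ this intersection is the whole localized ring, so $(\k[x]_a[s]\cdot \Up) \cap \k[s] = \k[x]_a[s] \cap \k[s] = \k[s]$, giving the first bullet.

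For the second bullet, set $\Up' = \bigcap_{i \in \sigma_a} \Up_i$, so that $S^{-1}\Up = S^{-1}\Up'$. The remaining task is to prove $S^{-1}\Up' \cap \k[s] = \Up' \cap \k[s]$. The inclusion $\supseteq$ is immediate since $S$ consists of nonzerodivisors in $\k[x,s]$. For $\subseteq$, suppose $P(s) \in \k[s]$ satisfies $s_0 P(s) \in \Up'$ for some $s_0 \in S$. For each $i \in \sigma_a$ we have $s_0 \notin \mathfrak{p}_i = \sqrt{\Up_i}$ by construction of $\sigma_a$; since $\Up_i$ is primary, $s_0 P(s) \in \Up_i$ forces $P(s) \in \Up_i$. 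Intersecting over $i \in \sigma_a$ gives $P(s) \in \Up'$, which finishes the proof.

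No single step looks like a serious obstacle: the whole argument is bookkeeping around the standard primary-decomposition-under-localization lemma. The only mild care needed is in verifying that the condition $a \in V(\Up_i \cap \k[x])$ really matches $\mathfrak{p}_i \cap S = \emptyset$, which is why I would state that equivalence explicitly at the outset and keep it as the hinge of the argument.
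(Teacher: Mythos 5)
Your proof is correct and follows essentially the same route as the paper's Lemma \ref{lem:p3}: decompose $\k[x]_a[s]\cdot\Up$ across the primary components (localization commutes with finite intersection), discard the components whose extension becomes the unit ideal, and use primarity to cancel the denominator $s_0 \in S$ on the surviving components. The paper phrases this via ``$cf\in\Up$ with $c(a)\ne 0$'' rather than $S^{-1}(-)$, but the key observations are identical.
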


Thus applying this to $\Up=I_2$ tells us that combining
a primary decomposition, a finite intersection of ideals and
an elimination of variables, we obtain $J_3$.

The results above say that the computation
of the local Bernstein-Sato ideal coincides with
that of the global Bernstein-Sato ideal
up to a point where everything is commutative.
Computing $I_1$ and $I_2$ is well-known. For $I_1$
one can follow Oaku and Takayama \cite{ot}
or Brian\c{c}on and Maisonobe \cite{bm02} and for
$I_2$ it is a usual elimination of global
variables.

Until now, $a$ was a fixed point in $\k^n$.
In the following result we are concerned with the
behaviour of $\B_{loc,a}(f)$ when $a$ runs over $\k^n$.
Let us apply the previous proposition to a primary
decomposition $\Up_1 \cap \cdots \cap \Up_r$ of $I_2$,
then the following holds.

\begin{cor}\label{cor:strat}
For each subset $\sigma \subset \{1,\ldots,r\}$, let
\begin{itemize}
\item
$W_\sigma=\k^n \setminus
(\bigcup_{i=1}^n V(\Up_i \cap \k[x]))$
if $\sigma= \emptyset$,
\item
$W_\sigma=\bigcap_{i=1}^n V(\Up_i \cap \k[x])$
if $\sigma=\{1,\ldots, n\}$,
\item
$W_\sigma=(\bigcap_{i \in \sigma} V(\Up_i \cap \k[x]))
\setminus (\bigcup_{i \notin \sigma} V(\Up_i \cap \k[x]))$
otherwise.
\end{itemize}
Then $\bigcup_{\sigma} W_\sigma$ is a constructible
stratification of $\k^n$ such that the map $\k^n \ni a
\mapsto \B_{loc,a}(f)$ is constant on each $W_\sigma$.
\end{cor}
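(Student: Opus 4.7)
The plan is to reduce the corollary directly to Propositions \ref{p1}, \ref{p2} and \ref{p3}, once the set-theoretic structure of the family $\{W_\sigma\}$ is clarified.

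First I would observe that for each $a \in \k^n$ the index set $\sigma_a = \{i : a \in V(\Up_i \cap \k[x])\} \subset \{1,\ldots,r\}$ appearing in Proposition \ref{p3} is precisely the unique $\sigma$ such that $a \in W_\sigma$: membership in $W_\sigma$ forces $a \in V(\Up_i \cap \k[x])$ for $i \in \sigma$ and $a \notin V(\Up_i \cap \k[x])$ for $i \notin \sigma$, and these two conditions together characterize $\sigma = \sigma_a$. Consequently the family $\{W_\sigma\}_\sigma$ is a partition of $\k^n$ indexed by subsets of $\{1,\ldots,r\}$, and each piece is locally closed (hence constructible), being the intersection of a Zariski closed set with the complement of a Zariski closed set. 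Some care is needed for the two degenerate cases ($\sigma = \emptyset$ and $\sigma = \{1,\ldots,r\}$) singled out in the statement, but both specialize the general formula correctly since one of the two defining set operations becomes empty.

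Next I would establish constancy of $a \mapsto \B_{loc,a}(f)$ on each $W_\sigma$ by chaining the three earlier propositions. By Proposition \ref{p1} we have $\B_{loc,a}(f) = J_3 = J_2 \cap \k[s]$, and by Proposition \ref{p2}, $J_2 = \k[x]_a[s] \cdot I_2$, so
\[
\B_{loc,a}(f) = (\k[x]_a[s] \cdot I_2) \cap \k[s].
\]
Applying Proposition \ref{p3} to $\Up = I_2$ with the prescribed primary decomposition then gives
\[
\B_{loc,a}(f) = \begin{cases}
\k[s] & \text{if } \sigma_a = \emptyset, \\
\bigl(\bigcap_{i \in \sigma_a} \Up_i\bigr) \cap \k[s] & \text{otherwise}.
\end{cases}
\]
Since $\sigma_a$ is identically equal to $\sigma$ on $W_\sigma$ by the first step, the right-hand side depends on $a$ only through $\sigma$, which proves constancy.

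The whole argument is essentially bookkeeping on top of the earlier propositions; no step presents a serious obstacle. If anything is subtle, it is checking that the definitions of the $W_\sigma$ really do partition $\k^n$ and agree with the indexing set $\sigma_a$ of Proposition \ref{p3}, which is a routine verification of set-theoretic identities.
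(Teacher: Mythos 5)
Your proposal is correct and follows essentially the same route as the paper's proof: identify $\sigma_a$ with the unique $\sigma$ for which $a \in W_\sigma$, note local closedness, and then invoke Propositions \ref{p1}, \ref{p2}, \ref{p3} to deduce that $\B_{loc,a}(f)$ depends only on $\sigma_a$. You merely spell out the chain of propositions more explicitly than the paper, which summarizes it as ``using the whole algorithm and in particular Prop.\ \ref{p3}.''
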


As another consequence of our algorithm, we recover this
well-known result:
\begin{cor}\label{cor:glob-loc}
Assume that $\k$ is algebraically closed. Then
\[\B_{glob}(f) = \bigcap_{a \in \k^n} \B_{loc,a}(f).\]
\end{cor}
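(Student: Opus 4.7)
The plan is to use the description of $J_3 = \B_{loc,a}(f)$ coming from Propositions \ref{p1} and \ref{p3} together with a single primary decomposition of $I_2$ that does \emph{not} depend on $a$. Fix such a decomposition $I_2 = \Up_1 \cap \cdots \cap \Up_r$. By Proposition \ref{p1} the global side is immediately
\[
\B_{glob}(f) = I_3 = I_2 \cap \k[s] = \bigcap_{i=1}^r \bigl(\Up_i \cap \k[s]\bigr),
\]
and Proposition \ref{p3} (applied to $\Up=I_2$) rewrites the local side as
\[
\B_{loc,a}(f) = \bigcap_{i \in \sigma_a}\bigl(\Up_i \cap \k[s]\bigr),
\]
with the convention that the empty intersection is $\k[s]$, where $\sigma_a = \{i : a \in V(\Up_i \cap \k[x])\}$.

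For the inclusion $\B_{glob}(f) \subset \bigcap_a \B_{loc,a}(f)$, I would simply note that $\sigma_a \subset \{1,\ldots,r\}$, so the local ideal is an intersection over a subfamily and therefore contains the intersection over all $i$; this holds for every $a$ and does not require any hypothesis on $\k$.

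For the reverse inclusion I would show that for each $i \in \{1,\ldots,r\}$ there exists some $a_i \in \k^n$ with $i \in \sigma_{a_i}$: this gives $\bigcap_a \B_{loc,a}(f) \subset \B_{loc,a_i}(f) \subset \Up_i \cap \k[s]$, and intersecting over $i$ concludes. To produce such an $a_i$, observe that $\Up_i$ is primary, hence proper in $\k[x,s]$, hence $\Up_i \cap \k[x]$ is a proper ideal of $\k[x]$. Here is where the assumption that $\k$ is algebraically closed enters: by the Nullstellensatz, $V(\Up_i \cap \k[x]) \subset \k^n$ is non-empty, so we may pick $a_i$ inside it.

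The main (in fact only) obstacle is this last point, namely the existence of a $\k$-rational point in $V(\Up_i \cap \k[x])$ for each component. Over a non-algebraically-closed field this can fail (the ideal could cut out only points defined over a proper extension), and that is exactly the reason the hypothesis on $\k$ is needed; everything else is a purely formal manipulation with the primary decomposition furnished by Proposition \ref{p3}.
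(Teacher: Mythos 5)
Your proof is correct and takes essentially the same route as the paper, but you have streamlined it. The paper does not pick $a_i$ arbitrarily in $V(\Up_i \cap \k[x])$: it defines $\tau_i = \{k : V(\Up_i \cap \k[x]) \subset V(\Up_k \cap \k[x])\}$, shows that the stratum $W_{\tau_i}$ from Corollary~\ref{cor:strat} is nonempty via an irreducibility argument (using that $\Up_i \cap \k[x]$ is primary so its radical is prime and its zero set is irreducible), and then picks $a_i \in W_{\tau_i}$. You instead take any $a_i \in V(\Up_i \cap \k[x])$, with nonemptiness coming directly from the weak Nullstellensatz since $\Up_i$ is primary hence proper. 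Both arguments then run identically: the only property of $a_i$ that is actually used is $i \in \sigma_{a_i}$, i.e.\ $a_i \in V(\Up_i \cap \k[x])$, so the additional constraint the paper imposes on $a_i$ (avoiding the varieties $V(\Up_k \cap \k[x])$ for $k \notin \tau_i$) plays no role in the conclusion. Your version is thus a genuine simplification of the paper's proof, and you correctly identify that algebraic closedness enters only to guarantee a $\k$-rational point on each $V(\Up_i \cap \k[x])$.
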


\begin{rem*}
With a slight generalisation of the construction and
similar proofs one can obtain the following result
when $\k$ is not supposed to be algebraically
closed (see \cite[Prop. 1.4]{bm02}):
\[\B_{glob}(f)
= \bigcap_{m \in \mathrm{SpecMax(\k[x])}} \B_{loc,m}(f),\]
where $\B_{loc,m}(f)$ is the set of $b(s) \in \k[s]$ such that
there exists $a(x) \in \k[x] \smallsetminus m$ such that
$a(x) b(s) f^s \in D[s] F  f^s$.
\end{rem*}

Let us end this section with some additionnal results.

Again, $a$ is a fixed point in $\k^n$.
The first statement in Proposition \ref{p2} says
that the annihilators of $f^s$ in $D[s]$ and
in $D_a[s]$ have a common set of generators.
Similarly we have:

\begin{prop}\label{p4}
\[ \mathrm{ann}_{\Dhat_a[s]}(f^s)= \Dhat_a[s] \cdot \mathrm{ann}_{D[s]}(f^s)
\qquad \text{and} \qquad
\mathrm{ann}_{\Dhat_a}(f^s)= \Dhat_a \cdot \mathrm{ann}_{D}(f^s).\]
\end{prop}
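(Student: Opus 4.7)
The plan is to prove both identities by a faithful-flatness argument. In each case the inclusion $\supseteq$ is trivial, since the right-hand side is a left ideal of the enlarged ring contained in the annihilator; I shall only treat $\subseteq$. The basic observation is that $\Ohat_{\k^n,a}$ is faithfully flat over $\k[x]$: the localization $\k[x]\to\k[x]_a$ is flat, and the completion at the maximal ideal of a Noetherian local ring is faithfully flat. By a Poincar\'e--Birkhoff--Witt decomposition this transfers to differential operators, yielding canonical right-module isomorphisms
\[
\Dhat_a[s]\;\cong\;\Ohat_{\k^n,a}\otimes_{\k[x]} D[s]\qquad\text{and}\qquad\Dhat_a\;\cong\;\Ohat_{\k^n,a}\otimes_{\k[x]} D
\]
via $g\otimes P\mapsto gP$; in particular $\Dhat_a[s]$ (resp.\ $\Dhat_a$) becomes a flat right $D[s]$-module (resp.\ right $D$-module).

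Next I would apply the resulting exact functor $\Dhat_a[s]\otimes_{D[s]}-$ to the short exact sequence of left $D[s]$-modules
\[
0\to\mathrm{ann}_{D[s]}(f^s)\to D[s]\xrightarrow{\cdot f^s} D[s]\cdot f^s\to 0,
\]
with $D[s]\cdot f^s\subset\mathcal{L}_{\k[x]}$, to obtain
\[
0\to\Dhat_a[s]\cdot\mathrm{ann}_{D[s]}(f^s)\to\Dhat_a[s]\to\Dhat_a[s]\otimes_{D[s]}(D[s]\cdot f^s)\to 0.
\]
The key step is to identify the right-hand term with $\Dhat_a[s]\cdot f^s\subset\mathcal{L}_{\Ohat_{\k^n,a}}$: starting from $\Dhat_a[s]\otimes_{D[s]}\mathcal{L}_{\k[x]}\cong\Ohat_{\k^n,a}\otimes_{\k[x]}\mathcal{L}_{\k[x]}\cong\mathcal{L}_{\Ohat_{\k^n,a}}$, flatness preserves the inclusion $D[s]\cdot f^s\hookrightarrow\mathcal{L}_{\k[x]}$, and its image after tensoring is exactly the $\Dhat_a[s]$-submodule $\Dhat_a[s]\cdot f^s$. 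Under this identification the third arrow becomes multiplication by $f^s$, whose kernel is by definition $\mathrm{ann}_{\Dhat_a[s]}(f^s)$, yielding the first claimed equality. The second equality follows by the same argument with $s$-variables removed throughout.

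The hard part will be the careful verification that the $\Dhat_a[s]$-module structure on $\Ohat_{\k^n,a}\otimes_{\k[x]}\mathcal{L}_{\k[x]}$ really agrees with the one on $\mathcal{L}_{\Ohat_{\k^n,a}}$, i.e.\ that the twisted derivation action
\[
\partial_{x_i}\cdot gf^s=\Bigl(\partial_{x_i}g+g\sum_{j=1}^p s_j\frac{\partial_{x_i}f_j}{f_j}\Bigr)f^s
\]
extends consistently under base change. This is morally automatic because the $f_j$ already lie in $\k[x]$, so no transcendental issues arise, but the check must be performed carefully. Once in place, the rest is a routine consequence of faithful flatness, and the result can be viewed as the natural completion-analog of the first statement of Proposition~\ref{p2}.
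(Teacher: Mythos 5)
Your argument is essentially the paper's: it too uses flatness of $\Dhat_a[s]$ over $D[s]$, applies the resulting exact functor to the sequence $0\to\mathrm{ann}_{D[s]}(f^s)\to D[s]\to D[s]f^s\to 0$, and then identifies $\Dhat_a[s]\otimes_{D[s]}(D[s]f^s)$ with $\Dhat_a[s]f^s$ by embedding both into $\k[[x-a]][1/F,s]f^s$. The step you label ``the hard part'' is exactly where the paper does its explicit work, constructing the two maps $\varphi$ (tensoring the inclusion $D[s]f^s\hookrightarrow\k[x][1/F,s]f^s$ with $\k[[x]]$) and $\psi$ (the natural map to $\k[[x]][1/F,s]f^s$), showing $\psi$ is an isomorphism because $\mathcal{L}_{\k[x]}$ is a free $\k[x][1/F,s]$-module, and then computing that the image of $\psi\circ\varphi$ is $\Dhat_0[s]f^s$; you should carry out that verification rather than defer it, but the route is the same. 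One minor remark: you invoke faithful flatness, but only flatness of $\Ohat_{\k^n,a}$ over $\k[x]$ is actually used.
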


\begin{rem} \
\begin{itemize}
\item
With the same proof we get: if $\k=\C$ then
\[\mathrm{ann}_{\D_a[s]}(f^s)= \D_a[s] \cdot \mathrm{ann}_{D[s]}(f^s)
\qquad \text{and} \qquad
\mathrm{ann}_{\D_a}(f^s)= \D_a \cdot \mathrm{ann}_{D}(f^s).\]
\item
Set $\hat{I}_1=\mathrm{ann}_{\Dhat_a[s]}(f^s)$;
$\hat{I}_2=(\hat{I}_1+ \Dhat_a[s] F) \cap \Ohat_{\k^n,a}[s]$;
$\hat{I}_3=\hat{I}_2 \cap \k[s]$.

With standard bases methods, one can prove that:
$\hat{I}_2=\Ohat_{\k^n, a}[s] \cdot I_2$.

By using the faithfull flatness of $\Ohat_{\k^n, a}$ over $\k[x]_a$
one can prove that: $\hat{I_2} \cap \k[x]_a[s] = J_2$
and thus $\hat{I_3}=J_3$. This is another way to recover
the well-known equality between the local and the formal
Bernstein-Sato ideals.
\end{itemize}
\end{rem}

We shall leave the proof of the second statement of this remark
since it is not the goal of our paper.

\section{Proofs}

In this section we give all the proofs. All the proofs, except
for Cor. \ref{cor:strat} and Cor. \ref{cor:glob-loc}, concern a
fixed point $a$; so for these proofs, we shall assume that $a=0$.

First let us prove lemma \ref{lem}.

\begin{proof}
Let us do the proof for $\mathrm{ann}_{\Dhat_0\langle t,
\ddt \rangle}(f^s)$. The other cases are similar.
Recall that $\mathrm{ann}_{\Dhat_0 \langle t, \ddt
\rangle}(f^s)$ is the left ideal
$\{P \in \Dhat_0\langle t, \ddt \rangle \ | \ P\cdot f^s=0\}$.
Let $P$ be in this ideal. Modulo the elements in (\ref{eq:I})
we may assume that $P\in \k[[x]][\ddt]$. Let
us write $P=\sum_{\nu} c_{\nu}
\ddt^\nu$ with $\nu \in \N^p$ and
$\ddt^\nu=\prod_{j=1}^p \dtj^{\nu_j}$
and $c_\nu \in \k[[x]]$. Then
\[
0=P f^s=\sum_\nu (-1)^{|\nu |} c_\nu
\prod_{j=1}^p (s_j \cdots (s_j-\nu_j+1) f_j^{-\nu_j}) f^s.
\]
This equality takes place in the free module
$\k[[x]][1/F, s] \cdot f^s$, thus all the terms
in the sum are zero, which implies that all the $c_\nu$
are zero. This concludes the proof.
\end{proof}

Now let us prove the propositions.
Let us begin with the
\begin{proof}[Proof of Proposition \ref{p1}]
Let us prove the second equality (the proof is the same for
the first one).
Let $b(s)$ be in $\k[s]$. If $b(s) \in \B_{loc,0}(f)$ then
$b(s) f^s = P f^{s+1}$ for some $P \in D_0[s]$. Thus
$b(s) -P F$ annihilates $f^s$, i.e. $b(s) \in (D_0[s] F
+J_1) \cap \k[s]=J_3$. The converse uses the same
arguments. We leave the details to the reader.
\end{proof}

Let us go on with the
\begin{proof}[Proof of Proposition \ref{p2}]
We have $I \subset J$ so $I_1 \subset J_1$ and then
$D_0[s] I_1 \subset J_1$. Let us show the converse inclusion.
Take $P$ in $J_1=(D_0\langle t, \ddt \rangle \cdot I) \cap
D_0[s]$. Writing $P$ as an element in $D_0 \langle t, \ddt
\rangle I$ and as an element of $D_0[s]$ we may clear the
denominators and obtain the existence of $c(x) \in \k[x]$
with $c(0) \ne 0$ such that $c(x) P \in I \cap D[s]$.
Thus $P$ is in $D_0[s] (I \cap D[s])=D_0[s] I_1$.
This ends the proof for the first equality.
For the second one the arguments are exactly the same.
\end{proof}

Proposition \ref{p3} is an obvious consequence
of the following lemma.

\begin{lem}\label{lem:p3}\
\begin{itemize}
\item[(i)]
If $\Up \subset \k[x,s]$ is an ideal with
$0 \notin V(\Up \cap \k[x])$ then

$ (\k[x]_{0}[s] \cdot \Up) \cap \k[s]= \k[s]$.
\item[(ii)]
If $\Up \subset \k[x,s]$ is a primary ideal with
$0 \in V(\Up \cap \k[x])$ then

$ (\k[x]_{0}[s] \cdot \Up) \cap \k[s]=
\Up \cap \k[s]$.
\item[(iii)]
Given ideals $\Up_1, \ldots, \Up_r$ in $\k[x,s]$, we have:

$ \k[x]_0[s] \cdot (\bigcap_{i=1}^r \Up_i)=
\bigcap_{i=1}^r (\k[x]_0[s] \cdot \Up_i)$.
\end{itemize}
\end{lem}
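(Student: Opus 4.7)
The plan is to handle the three items of the lemma independently, since each comes down to a standard localization maneuver (clearing denominators by multiplying by an element not vanishing at $0$), plus, for item (ii), the defining property of primary ideals. Since Proposition \ref{p3} follows by applying (iii) first to the primary decomposition $\Up=\Up_1\cap\cdots\cap\Up_r$ and then applying (i) or (ii) component by component, proving the lemma really is the whole game.

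For item (i), the hypothesis $0\notin V(\Up\cap\k[x])$ supplies some $g(x)\in\Up\cap\k[x]$ with $g(0)\neq 0$. Then $g$ becomes a unit in $\k[x]_0$, so $1\in \k[x]_0[s]\cdot\Up$, forcing $(\k[x]_0[s]\cdot\Up)\cap\k[s]=\k[s]$. This is essentially one line and should be recorded as a warm-up.

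For item (ii), the inclusion $\supset$ is obvious. For the reverse inclusion, I would take $b(s)\in(\k[x]_0[s]\cdot\Up)\cap\k[s]$ and clear denominators in its expression: there exist $c(x)\in\k[x]$ with $c(0)\neq 0$ such that $c(x)b(s)\in\Up$. The key observation is that $c(x)\notin\sqrt{\Up}$; otherwise $c(x)^k\in\Up\cap\k[x]$ for some $k$, forcing $c(0)^k=0$ against $0\in V(\Up\cap\k[x])$. Since $\Up$ is primary and $c(x)\notin\sqrt{\Up}$, the relation $c(x)b(s)\in\Up$ gives $b(s)\in\Up$, hence $b(s)\in\Up\cap\k[s]$. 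The only place where care is needed is verifying that $c$ is not in the radical; once that is granted, the primary hypothesis does the rest.

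For item (iii), the inclusion $\subset$ is immediate. For the reverse, given $g\in\bigcap_{i=1}^r(\k[x]_0[s]\cdot\Up_i)$ I write $g=h_i/c_i$ with $h_i\in\Up_i$ and $c_i(x)\in\k[x]$, $c_i(0)\neq 0$ (after pushing all $s$-denominators back, which is allowed because the denominators to be inverted lie in $\k[x]$). Setting $c=\prod_i c_i$, we have $c(0)\neq 0$ and $c\cdot g\in\Up_i$ for every $i$, hence $c\cdot g\in\bigcap_i\Up_i$, so $g\in\k[x]_0[s]\cdot\bigcap_i\Up_i$. The main obstacle here is purely bookkeeping: convincing oneself that an element of $\k[x]_0[s]$ may always be written with denominator in $\k[x]\setminus\mathfrak{m}_0$ rather than in $\k[x,s]$ localized more broadly, which holds because we only invert the multiplicative set $\{c(x)\in\k[x]\mid c(0)\neq 0\}$.
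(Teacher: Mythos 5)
Your proof is correct and follows essentially the same route as the paper's: clear denominators over the multiplicative set $\{c\in\k[x]\mid c(0)\ne 0\}$, then for (ii) use the primary hypothesis together with the observation that $c\notin\sqrt{\Up}$ (the paper phrases this as a contradiction, you phrase it directly, but the content is identical), and for (iii) multiply the finitely many denominators. The only additional remark you make, that the denominators live in $\k[x]$ rather than $\k[x,s]$, is a worthwhile clarification the paper leaves implicit.
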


\begin{proof}
\begin{itemize}
\item[(i)]
If $0 \notin V(\Up \cap \k[x])$ then there exists
$g \in \Up \cap \k[x]$ such that $g(0) \ne 0$ which
implies that $1 \in \Up$. 
\item[(ii)]
Let $f \in (\k[x]_0[s] \cdot \Up) \cap \k[s]$,
then there exists $c \in \k[x]$ with $c(0) \ne 0$
such that $c f \in \Up$. Assume, by contradiction,
that $f \notin \Up$. Then since $\Up$ is primary,
$c^l \in \Up$ for some $l \in \N$. This implies
$c(0)=0$: contradiction. Thus $f \in \Up \cap \k[s]$.
We proved the left-right inclusion. The reverse one
is trivial.
\item[(iii)]
Since the left-right inclusion is trivial, let us prove
the other one. Let $f$ be in $\bigcap_{i=1}^r
(\k[x]_0[s] \cdot \Up_i)$. Then for each $i$,
$c_i f \in \Up_i$ for some $c_i \in \k[x]$ satisfying
$c_i(0) \ne 0$. As a consequence, $(\prod_1^r c_i)
f \in \bigcap_1^r \Up_i$ and then $f \in \k[x]_0[s]
\cdot (\bigcap_{i=1}^r \Up_i)$.
\end{itemize}
\end{proof}

Now, let us work with arbitrary points $a \in \k^n$ and
prove the two corollaries.

\begin{proof}[Proof of Corollary \ref{cor:strat}]
First, it is clear that each $W_\sigma$ is locally closed.
Moreover, it is clear that any $a \in \k^n$ belongs to some
$W_\sigma$ (indeed, $a \in W_{\sigma_a}$ with the notations
of Prop. \ref{p3}).
Thus we have a constructible stratification of $\k^n$.
The constancy of the map $(a \mapsto \B_{loc,a}(f))$ on each
$W_\sigma$ follows from the obvious observation that if
$a$ and $a'$ are two points in a $W_\sigma$ then
$\sigma_a=\sigma_{a'}$, which implies, by using the whole
algorithm and in particular Prop. \ref{p3}, that $\B_{loc,a}(f)=
\B_{loc,a'}(f)$.
\end{proof}

\begin{proof}[Proof of Corollary \ref{cor:glob-loc}]
First, it is obvious from the definitions that
$\B_{glob}(f)$ is included in any $\B_{loc,a}(f)$ so we
have the inclusion: $\B_{glob}(f) \subset
\cap_a \B_{loc,a}(f)$. Let us prove the converse one.
We follow the notations of Prop. \ref{p3} and Cor.~\ref{cor:strat}.
Let us fix $i \in \{1, \ldots, r\}$.
Notice that since $\Up_i \subset \k[x,s]$ is primary,
$\Up_i \cap \k[x]$ is also primary in $\k[x]$ so
$V(\Up_i \cap \k[x])$ is irreducible.
Let
\[\tau_i=\{ k \in \{1, \ldots, r\} |
V(\Up_i \cap \k[x]) \subset V(\Up_k \cap \k[x]) \}.\]

Assume, by contradiction, that $W_{\tau_i}=\emptyset$. Then
$V(\Up_i \cap \k[x]) \subset \cup_{k \notin \tau_i} V(\Up_k \cap \k[x])$
and by irreducibility of $V(\Up_i \cap \k[x])$ it would be contained
in some $V(\Up_k \cap \k[x])$ for some $k \notin \tau_i$, which
is impossible. So let $a_i \in W_{\tau_i}$. Then we have:
$\B_{loc,a_i}(f) \subset \Up_i \cap \k[s]$.
As a consequence, we get:
\begin{eqnarray*}
\bigcap_{a \in \k^n} \B_{loc,a}(f) & \subset &
\bigcap_{i=1,\ldots, r} \B_{loc, a_i}(f) \\
& \subset & \bigcap_{i=1,\ldots,r} (\Up_i \cap \k[s])\\
& = & I_2 \cap \k[s]\\
& = & \B_{glob}(f).
\end{eqnarray*}
\end{proof}

Let us end this section with the proof of Proposition~\ref{p4}
(again, we assume that $a=0$).

%%% proof of Proposition 1.7
%%% T.Oaku, April 24, 2008

\begin{proof}
We shall prove only the first statement. The arguments
are the same for the second equality. Only one inclusion
is non trivial, namely
$\mathrm{ann}_{\Dhat_0[s]}(f^s) \subset \Dhat_0[s] I_1$.
First we have a natural isomorphism
\begin{equation}\label{seq1}
\k[[x]] \otimes_{\k[x]} D[s] f^s
\simeq
\Dhat_0[s] \otimes_{D[s]} D[s] f^s.
\end{equation}
This gives a natural left $\Dhat_0[s]$-module structure on
the tensor product of the left-hand side.

Now let us start with the following exact sequence
of $D[s]$-modules:
\[
0 \rightarrow I_1 \rightarrow D[s] \rightarrow D[s] f^s
\rightarrow 0.
\]
By flatness of $\Dhat_0[s]$ over $D[s]$, we get an exact
sequence of $\Dhat_0[s]$-modules:
%\begin{equation}\label{seq2}
\[
0 \rightarrow \Dhat_0[s] I_1 \rightarrow \Dhat_0[s] \rightarrow
\Dhat_0[s] \otimes D[s] f^s \rightarrow 0.
\]
%\end{equation}

\noindent
Thanks to the isomorphism (\ref{seq1}),
it remains to prove that $\k[[x]] \otimes D[s] f^s$
is naturally isomorphic to $\Dhat_0[s] f^s$.

We have an injective $D[s]$-morphism:
\[0 \to D[s] f^s \to \k[x][1/F,s] f^s.\]
Flatness of $\k[[x]]$ over $\k[x]$ implies the exactness of
%\begin{equation}\label{seq3}
\[
0 \to \k[[x]] \otimes_{\k[x]} D[s] f^s \stackrel{\varphi}{\to}
\k[[x]] \otimes_{\k[x]} \k[x][1/F,s]f^s. 
\]
%\end{equation}
On the other hand, there is a natural homomorphism
\[
\k[[x]] \otimes_{\k[x]} \k[x][1/F,s]f^s \stackrel{\psi}{\to} 
\k[[x]][1/F,s]f^s. 
\]
An arbitrary element of $\k[[x]] \otimes_{\k[x]} \k[x][1/F,s]f^s$ is
written in the form $\sum_\mu \hat{c}_\mu(x)\otimes s^\mu F^{-m}f^s$
with $\hat{c}_\mu(x) \in \k[[x]]$ and $m \in \N$, and it is sent to
$\sum_\mu \hat{c}_\mu(x)s^\mu F^{-m}f^s$ by $\psi$. 
The latter is zero if and only if $\hat{c}_\mu(x) = 0$ for any $\mu$. 
This shows that $\psi$ is an isomorphism. 

An arbitrary element 
$\sum_{\mu,\beta} \hat{c}_{\mu,\beta}(x)\otimes s^\mu \partial_x^\beta f^s$ of 
$\k[[x]] \otimes_{\k[x]} D[s] f^s$ with $\hat{c}_{\mu,\beta}(x) \in \k[[x]]$
is sent to 
$\sum_{\mu,\beta} \hat{c}_{\mu,\beta}(x) s^\mu \partial_x^\beta f^s$
by $\psi\circ\varphi$.
This implies that the image of $\psi\circ\varphi$ coincides with 
$\Dhat_0[s]f^s$. 

Thus $\psi\circ\varphi$ gives a natural isomorphism $\k[[x]] \otimes
D[s] f^s \simeq \Dhat_0[s] f^s$ and it is naturally
$\Dhat_0[s]$-linear.
Hence we get an exact sequence of $\Dhat_0[s]$-modules
\[0 \rightarrow \Dhat_0[s] I_1 \rightarrow \Dhat_0[s]
\rightarrow \Dhat_0[s] f^s \rightarrow 0\]
with natural maps. This completes the proof.
\end{proof}

%%%%%%%%%%%%%%%%%%%%%%%%

\section{Examples}

Here let us start with a result ``well-known to specialists''.
Let us assume $f$ to be in $\k[[x]]^p$.

\begin{lem} \label{lem:classic}
\begin{itemize}
\item[(i)]
Let $u_1, \ldots, u_p$ be units in $\k[[x]]$. Then
$\B_{\k[[x]]}(f)$ is equal to
$\B_{\k[[x]]}(u_1f_1, \ldots, u_p f_p)$.
\item[(ii)]
If $f_{k+1}, \ldots, f_p$ are units in $\k[[x]]$ then
$\B_{\k[[x]]}(f)$ is equal to $\k[s_1, \ldots, s_p] \cdot
\B_{\k[[x]]}(f_1, \ldots, f_k)$.
\item[(iii)]
Let $\K \supset \k$ be a field extension of $\k$. Then
$\B_{\K[x]}(f)=\K[s] \cdot \B_{\k[x]}(f)$ and for any $a \in \k^n$,
$\B_{\K[[x-a]]}(f)=\K[s] \cdot \B_{\k[[x-a]]}(f)$.
\item[(iv)]
Suppose $f \in \k[x]^p$. Let $a \in \k^n$ be such that $f(a)=0$
and $f$ is smooth at $a$.
Then $\B_{loc, a}(f)$ is generated by $\prod_{j=1}^p (s_j+1)$.
\end{itemize}
\end{lem}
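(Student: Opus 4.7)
My plan is to treat the four parts in sequence, with (i)--(iii) serving as preparatory reductions and (iv) as the substantive case. For part (i), the key point is that the rank-one $\Dhat_a[s]$-module $\k[[x]][1/F,s]\cdot f^s$ is isomorphic to $\k[[x]][1/UF,s]\cdot(uf)^s$ (with $U = u_1\cdots u_p$): the change of generator is encoded by the $\k[s]$-algebra automorphism $\sigma$ of $\Dhat_a[s]$ sending $\partial_{x_i}$ to $\partial_{x_i} - \sum_j s_j(\partial_{x_i}u_j)/u_j$, whose coefficients lie in $\k[[x]][s]$ since each $u_j$ is a unit. Applying $\sigma$ to an identity $b(s)f^s = PFf^s$ produces $b(s)(uf)^s = \bigl(\sigma(P)/U\bigr)(UF)(uf)^s$, and since $U$ is a unit the right-hand side lies in $\Dhat_a[s](UF)(uf)^s$, proving $\B_{\k[[x]]}(f) \subset \B_{\k[[x]]}(uf)$; the reverse inclusion follows from the same argument applied to the units $u_j^{-1}$.

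For part (ii), I would first use (i) with $u_j = f_j^{-1}$ for $j > k$ to replace $f_{k+1},\ldots,f_p$ by $1$, so that $f^s = f_1^{s_1}\cdots f_k^{s_k}$ is independent of $s_{k+1},\ldots,s_p$. Decomposing $b(s) = \sum_\alpha b_\alpha(s_1,\ldots,s_k) s_{k+1}^{\alpha_{k+1}}\cdots s_p^{\alpha_p}$ and analogously a witness $P$, and comparing coefficients of the monomials in $s_{k+1},\ldots,s_p$ on both sides of $b(s)f^s = PFf^s$ inside the free module $\k[[x]][1/F,s]f^s$, I would reduce membership in $\B_{\k[[x]]}(f)$ to the independent conditions $b_\alpha \in \B_{\k[[x]]}(f_1,\ldots,f_k)$, which is the stated equality. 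For part (iii), I would fix a $\k$-basis $(e_i)$ of $\K$, expand $b$ and a witness $P$ uniquely along this basis, and compare $e_i$-components in $b(s)f^s = PFf^s$; the $e_i$-components of $P$ witness that each $b_i \in \B_\k(f)$, and the reverse inclusion is trivial. The same argument works in the formal setting because each coefficient (of a given power of $x-a$) of an element of $\K[[x-a]]$ still has a unique expansion in the basis $(e_i)$.

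Part (iv) is the main case. Since $\B_{loc,a}(f) = \B_{\Ohat_{\k^n,a}}(f)$, I would work in $\Ohat_{\k^n,a}$. The smoothness hypothesis says that $df_1|_a,\ldots,df_p|_a$ are linearly independent, so by the formal inverse function theorem $f_1,\ldots,f_p$ can be completed to formal coordinates $y_1,\ldots,y_n$ centred at $a$ with $y_j = f_j$ for $j\le p$; applying (i) absorbs any unit produced by the Jacobian, reducing to $f = (y_1,\ldots,y_p)$ in $\k[[y_1,\ldots,y_n]]$. The identity
\[\partial_{y_1}\cdots\partial_{y_p}\bigl(y_1^{s_1+1}\cdots y_p^{s_p+1}\bigr) = \prod_{j=1}^p(s_j+1)\cdot y_1^{s_1}\cdots y_p^{s_p}\]
then gives $\prod_j(s_j+1) \in \B_{\k[[y]]}(f)$. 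For the converse, if $b(s)f^s = PFf^s$ with $P = \sum_\alpha P_\alpha(s)\partial_y^\alpha$, only multi-indices with $\alpha_{p+1} = \cdots = \alpha_n = 0$ contribute, and matching inside the free module gives an identity $b(s) = \sum_\alpha P_\alpha(s)c_\alpha(s)\, y_1^{1-\alpha_1}\cdots y_p^{1-\alpha_p}$ with $c_\alpha(s) = \prod_j(s_j+1)s_j\cdots(s_j+2-\alpha_j)$. Specialising $s_1 = -1$ makes every surviving term contain a factor of $y_1$ on the right, forcing $b(-1,s_2,\ldots,s_p) = 0$; applying this for each index yields $\prod_j(s_j+1) \mid b(s)$, completing the proof.

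The step I expect to be most delicate is the reduction in (iv) to the model case $f = (y_1,\ldots,y_p)$: one has to combine the formal change of coordinates with the unit-absorption provided by (i), and then carry out the coefficient-comparison argument inside the correct free module. Part (iii) in the formal setting is also a minor concern because $\K[[x-a]]$ is strictly larger than $\K\otimes_\k\k[[x-a]]$ when $[\K:\k]$ is infinite, but termwise expansion of the $(x-a)$-coefficients in the basis $(e_i)$ resolves this issue cleanly.
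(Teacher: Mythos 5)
Your proof is correct, but it takes a genuinely different and more self-contained route than the paper's. For (i) the paper argues via the change of variables $(x,t)\mapsto(x,u^{-1}t_1,t_2,\dots,t_p)$ in Malgrange's $\Dhat_0\langle t,\ddt\rangle$ picture, showing that $\mathrm{ann}(f^s)=\mathrm{ann}((uf)^s)$ there, whereas you work directly in $\Dhat_a[s]$ with the twisting automorphism $\partial_{x_i}\mapsto\partial_{x_i}-\sum_j s_j(\partial_{x_i}u_j)/u_j$; both are valid, and yours avoids passing through the $t$-variables. For (ii) the paper specialises $s_p$ at $d+1$ distinct scalars and inverts a Vandermonde matrix, while you read off the coefficients of the monomials in $s_{k+1},\dots,s_p$ directly inside the free rank-one module $\k[[x]][1/F,s]f^s$; your version is arguably cleaner. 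For (iii) the paper simply cites Brian\c{c}on--Maisonobe; your basis-expansion argument is a correct unfolding, and the potential obstruction you flag ($\K[[x-a]]\supsetneq \K\otimes_\k\k[[x-a]]$ when $[\K:\k]=\infty$) is indeed harmless because extracting the $e_i$-component is well defined termwise and is $\k[[x-a]]$-linear, so it carries the single relation $b(s)F^N=H(x,s)$ to the relations $b_iF^N=H_i$. For (iv) the paper uses (iii) to reduce to $\k=\C$ and then cites Brian\c{c}on--Maynadier, whereas you give a fully self-contained computation in formal coordinates; the forward and converse halves are both sound (and your specialisation $s_1=-1$ correctly kills all terms with $\alpha_1\geq1$, forcing the remaining identity $b(-1,s_2,\dots,s_p)=y_1\cdot(\text{series regular in }y_1)$ to vanish). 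Two small infelicities: in (iv) writing $P=\sum_\alpha P_\alpha(s)\partial_y^\alpha$ suppresses the $y$-dependence of the coefficients $P_\alpha$ (it should be $P_\alpha(y,s)$), though the argument goes through verbatim with that correction; and the remark that ``applying (i) absorbs any unit produced by the Jacobian'' is a red herring, since one may simply take $y_j:=f_j$ for $j\leq p$ and complete to a formal coordinate system, so no unit ever appears — the relevant fact one needs is invariance of $\B$ under formal coordinate change, which is standard and is a different statement from (i).
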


\begin{proof}
\begin{itemize}
\item[(i)]
Let us sketch the proof. One can see that it is enough to
make the proof with $f'=(u_1f_1, f_2, \ldots, f_p)$.
By considering the change of variables
$(x,t) \mapsto (x, u^{-1} t_1, t_2, \ldots, t_p)$ one can
prove that the annihilators of $f^s$ and ${f'}^s$ coincide
in $\Dhat_0 \langle t, \ddt \rangle$ from which one
can conclude.
\item[(ii)]
Thanks to (i), we may assume that $f_{k+1}=\cdots=f_p=1$.
Moreover it is enough to make the proof with $k=p-1$.
Set $f'=(f_1, \ldots, f_{p-1})$. The inclusion
$\k[s]\B(f') \subset \B(f)$ is trivial.
Let us prove the converse one.
Set $s'=(s_1,\ldots,s_{p-1})$ and let
$b(s',s_p) \in \B(f)$. We have:
\[b(s',s_p) {f'}^{s'} 1^{s_p} \in
\Dhat_0[s', s_p] f_1 \cdots f_{p-1} {f'}^{s'} 1^{s_p}.\]
Thus we see that for any $\lambda \in \k$, $b(s', \lambda)$
belongs to $\B(f')$. Let us write: $b(s', s_p)=
\sum_{l=0}^d c_k(s') s_p^l$. Let $\lambda_0, \ldots, \lambda_d$
be pairwise distinct elements in $\k$. We thus obtain the
existence of $b_0(s'), \ldots, b_d(s') \in \B(f')$ such that
\[
\left(
\begin{array}{cccc}
1 & \lambda_0 & \cdots & \lambda_0^d \\
\vdots &   &   &  \vdots \\
1 & \lambda_d & \cdots & \lambda_d^d
\end{array}
\right)
\left(
\begin{array}{c}
c_0(s') \\
\vdots \\
c_d(s')
\end{array}
\right)
=
\left(
\begin{array}{c}
b_0(s') \\
\vdots \\
b_d(s')
\end{array}
\right).
\]
This is an invertible Vandermonde matrix from which we deduce
that each $c_l$ is in $\B(f')$. This implies $b(s', s_p) \in
\k[s] \B(f')$.
\item[(iii)]
See e.g. Brian\c{c}on, Maisonobe \cite[Prop. 1.5]{bm02}.
\item[(iv)]
Let $C \subset \k$ be the (finite) set of the coefficients
of the $f_j$'s and let $\K=\Q(C \cup \{a\})$. Then
$\K$ is identified with a subfield of $\C$. So, thanks
to (iii), $\B_{loc,a}(f)=\B_{\C[x]_a}(f)$ and it is well-known
that it is equal to $\B_{\C\{x-a\}}(f)$. Thus we can conclude
with Brian\c{c}on-Maynadier \cite[Prop. 1.2]{bmay}. 
\end{itemize}
\end{proof}

\noindent
In the following, $\langle G \rangle$ denotes the
ideal generated by the set $G$.

The computations were made using Kan/sm1 \cite{kan}
and Risa/Asir \cite{asir}. Moreover the
computations were made over the field $\Q$.
In the last paragraph of this section, we check that
the results are valid over $\C$.

\subsection{Example 1}

This first example is trivial in the sense that all the
local Bernstein-Sato ideals can be computed using the previous
lemma. Let us define $f \in \Q[x,y]^3$ by:
\[(f_1,f_2,f_3)(x,y)=(x , y , 1-x-y).\]
Only with the lemma above one can say that given
$a \in \Q^2$, $\B_{loc,a}(f)$ is equal to:
\begin{itemize}
\item
$\Q[s]=\Q[s_1,s_2,s_3]$ for $a \notin \{x=0\} \cup \{y=0\}
\cup \{x+y=1\}$,
\item
$ \langle (s_1+1) \rangle$ for $a \in \{x=0\} \smallsetminus
\{(0,0), (0,1)\}$,
\item
$\langle (s_2+1)\rangle$ for $a \in \{y=0\} \smallsetminus
\{(0,0), (1,0)\}$,
\item
$\langle (s_3+1)\rangle$ for $a \in \{x+y=1\}
\smallsetminus \{(0,1), (1,0)\}$,
\item
$\langle (s_1+1)(s_2+1)\rangle$ if $a=(0,0)$,
$\langle (s_1+1)(s_3+1)\rangle$ if $a=(0,1)$,
$\langle (s_2+1)(s_3+1)\rangle$ if $a=(1,0)$.
\end{itemize}
By using Cor. \ref{cor:glob-loc} one has:

$\B_{glob}(f)=\langle(s_1+1)(s_2+1)(s_3+1) \rangle$.\\
We notice that the global Bernstein-Sato ideal is different
from all the local ones.
By using our algorithm, we found the following primary
decomposition for $I_2 \subset \Q[x,y,s_1,s_2,s_3]$:
\[I_2=\Up_1 \cap \Up_2 \cap \Up_3 \text{ where }
\Up_j=\langle s_j+1, f_j \rangle\]
which obviously enables to recover all the results above.

\subsection{Example 2}

Here, $f \in \Q[x,y]^3$ is given by:
\[(f_1,f_2,f_3)(x,y)= (y , y-2x+1 , y-x^2).\]
A computation by hand of all the local Bernstein-Sato
ideals is far from being easy. The computed primary decomposition
of the ideal $I_2$ has seven primary components $\Up_i$.
For each of them, we present:
$\sqrt{\Up_i}$, $\sqrt{\Up_i \cap \Q[x,y]}$,
$\Up_i \cap \Q[s_1,s_2,s_3]$.
\[
\begin{array}{llll}
(1) &
\langle  s_1+1, y\rangle &
\langle y \rangle &
\langle s_1+1 \rangle \\

(2) &
\langle  s_2+1, y-2x+1 \rangle &
\langle y-2x+1 \rangle &
\langle s_2+1 \rangle \\

(3) &
\langle s_3+1, y-x^2 \rangle &
\langle y -x^2 \rangle &
\langle s_3+1 \rangle \\

(4) &
\langle 2s_1+2s_3+3, x,y \rangle &
\langle x,y \rangle &
\langle 2s_1+2s_3+3 \rangle \\

(5) &
\langle 2s_2+2s_3+3, x-1, y-1 \rangle &
\langle x-1, y-1 \rangle &
\langle 2s_2+2s_3+3 \rangle \\

(6) &
\langle 2s_1+2s_3+5, x, y \rangle &
\langle x,y \rangle &
\langle 2s_1+2s_3+5 \rangle \\

(7) &
\langle 2s_2+2s_3+5, x-1, y-1 \rangle &
\langle x-1, y-1 \rangle &
\langle 2s_2+2s_3+5 \rangle \\
\end{array}
\]
We get:
$\B_{glob}(f)=\langle  (s_1+1) (s_2+1) (s_3+1) (2s_1+2s_3+3)
(2s_1+2s_3+5) (2s_2+2s_3+3) (2s_2+2s_3+5) \rangle$. This ideal
is different from all the local Bernstein-Sato ideals.

\subsection{Example 3}

Here, $f\in \Q[x,y]^2$ is given as follows:
\[(f_1,f_2)=(x^3+y^2, x^2+y^3).\]
This example has been stimulating several authors for several
years (in the global case and in the local case see
e.g. Bahloul \cite{b-JSC, b-kyushu},
Castro-Jim\'enez and Ucha-Enr\'{\i}quez \cite[\S 3.4]{castro-ucha}
and Gago-Vargas et al. \cite[Rem. 2]{gago}).

The obtained primary decomposition of the ideal $I_2$ is made of
twelve primary components $\Up_i$. In the following we present,
for each $i$, the ideals $\sqrt{\Up_i}$,
$\sqrt{\Up_i \cap \Q[x,y]}$ and $\Up_i \cap \Q[s_1, s_2]$.
\[
\begin{array}{llll}
(1) &
\langle x^3+y^2 , s_1+1 \rangle &
\langle x^3+y^2 \rangle &
\langle s_1+1 \rangle \\

(2) &
\langle x^2+y^3 , s_2+1 \rangle &
\langle x^2+y^3 \rangle &
\langle s_2+1 \rangle \\

(3) &
\langle x,y , 4s_1+6s_2+5 \rangle &
\langle x,y \rangle &
\langle 4s_1+6s_2+5 \rangle\\

(4) &
\langle x,y , 4s_1+6s_2+7 \rangle &
\langle x,y \rangle &
\langle 4s_1+6s_2+7 \rangle \\

(5) &
\langle x,y , 4s_1+6s_2+9 \rangle &
\langle x,y \rangle &
\langle 4s_1+6s_2+9 \rangle \\

(6) &
\langle x,y , 4s_1+6s_2+11 \rangle &
\langle x,y \rangle &
\langle 4s_1+6s_2+11 \rangle \\

(7) &
\langle x,y , 4s_1+6s_2+13 \rangle &
\langle x,y \rangle &
\langle 4s_1+6s_2+13 \rangle \\

(8) &
\langle x,y , 6s_1+4s_2+5 \rangle &
\langle x,y \rangle &
\langle 6s_1+4s_2+5 \rangle \\

(9) &
\langle x,y , 6s_1+4s_2+7 \rangle &
\langle x,y \rangle &
\langle 6s_1+4s_2+7 \rangle \\

(10) &
\langle x,y , 6s_1+4s_2+9 \rangle &
\langle x,y \rangle &
\langle 6s_1+4s_2+9 \rangle \\

(11) &
\langle x,y, 6s_1+4s_2+11 \rangle &
\langle x,y \rangle &
\langle 6s_1+4s_2+11 \rangle \\

(12) &
\langle x,y , 6s_1+4s_2+13 \rangle &
\langle x,y \rangle &
\langle 6s_1+4s_2+13 \rangle \\
\end{array}
\]

We get that $\B_{glob}(f)$ coincides with $\B_{loc,0}(f)$ and
is generated by the following polynomial:

\noindent
$(s_1+1) (s_2+1)
(4s_1+6s_2+5)(4s_1+6s_2+7)(4s_1+6s_2+9)(4s_1+6s_2+11)(4s_1+6s_2+13)
(6s_1+4s_2+5)(6s_1+4s_2+7)(6s_1+4s_2+9)(6s_1+4s_2+11)(6s_1+4s_2+13)$.

\subsection{Example 4}

Here $f \in \Q[x,y,z]^2$ is given by:
\[(f_1,f_2)=(z, x^4 + y^4 + 2 z x^2 y^2).\]
This important example is taken from
Brian\c{c}on, Maynadier \cite{bmay} where the authors proved
for the first time that local Bernstein-Sato ideals are not
principal in general.
They did not calculate $\B_{loc,0}(f)$; they only proved that it is
not principal.

The computation of a primary decomposition of $I_2$ outputs
nine primary components $\Up_i$ and
as before, we give, for each $i$:
$\sqrt{\Up_i}$,
$\sqrt{\Up_i \cap \Q[x,y,z]}$ and $\Up_i \cap \Q[s_1, s_2]$.
\[
\begin{array}{llll}
(1) &
\langle z , s_1+1 \rangle &
\langle z \rangle &
\langle s_1+1 \rangle \\

(2) &
\langle f_2 , s_2+1 \rangle &
\langle f_2 \rangle &
\langle s_2+1 \rangle \\

(3) &
\langle x,y , s_2+1 \rangle &
\langle x,y \rangle &
\langle (s_2+1)^2 \rangle \\

(4) &
\langle x,y , 2s_2+1 \rangle &
\langle x,y \rangle &
\langle 2s_2+1 \rangle \\

(5) &
\langle x,y , 4s_2+3 \rangle &
\langle x,y \rangle &
\langle 4s_2+3 \rangle \\

(6) &
\langle x,y , 4s_2+5 \rangle &
\langle x,y \rangle &
\langle 4s_2+5 \rangle \\

(7) &
\langle x,y,z , s_1+2, 2s_2+3 \rangle &
\langle x,y,z \rangle &
\langle s_1+2, 2s_2+3 \rangle \\

(8) &
\langle x,y, z-1 , 2s_2+3 \rangle &
\langle x,y, z-1 \rangle &
\langle 2s_2+3 \rangle \\

(9) &
\langle x,y,z+1 , 2s_2+3 \rangle &
\langle x,y,z+1 \rangle &
\langle 2s_2+3 \rangle
\end{array}
\]

As a consequence, $\B_{loc,0}(f)$ is generated by two elements:
\begin{eqnarray*}
\B_{loc,0}(f) &= &
\langle (s_1+1) (s_2+1)^2 (2s_2+1) (4s_2+3) (4s_2+5) (s_1+2),\\
 & & (s_1+1) (s_2+1)^2 (2s_2+1) (4s_2+3) (4s_2+5) (2s_2+3) \rangle
\end{eqnarray*}
and $\B_{glob}(f)$ is principal:
\[\B_{glob}(f)=\langle
(s_1+1) (s_2+1)^2 (2s_2+1) (2s_2+3) (4s_2+3) (4s_2+5)
\rangle\]

\subsection{Example 5}

Here $f \in \Q[x,y,z]^2$ is given by:
\[(f_1,f_2)=(z, x^5 + y^5 + z x^2 y^3).\]
The computed primary decomposition of $I_2$ is made of
twelve terms $\Up_i$. Again, for each $i=1$, we give
$\sqrt{\Up_i}$,
$\sqrt{\Up_i \cap \Q[x,y,z]}$ and $\Up_i \cap \Q[s_1, s_2]$.
\[
\begin{array}{llll}
(1) &
\langle z , s_1+1 \rangle &
\langle z \rangle &
\langle s_1+1 \rangle \\

(2) &
\langle f_2 , s_2+1 \rangle &
\langle f_2 \rangle &
\langle s_2+1 \rangle \\

(3) &
\langle x,y , s_2+1 \rangle &
\langle x,y \rangle &
\langle (s_2+1)^2 \rangle \\

(4) &
\langle x,y , 5s_2+2 \rangle &
\langle x,y \rangle &
\langle 5s_2+2 \rangle \\

(5) &
\langle x,y , 5s_2+3 \rangle &
\langle x,y \rangle &
\langle 5s_2+3 \rangle \\

(6) &
\langle x,y , 5s_2+4 \rangle &
\langle x,y \rangle &
\langle 5s_2+4 \rangle \\

(7) &
\langle x,y , 5s_2+6 \rangle &
\langle x,y \rangle &
\langle 5s_2+6 \rangle \\

(8) &
\langle x,y,z , s_1+2, 5s_2+7 \rangle &
\langle x,y,z \rangle &
\langle s_1+2, 5s_2+7 \rangle \\

(9) &
\langle x,y,z , s_1+3, 5s_2+7 \rangle &
\langle x,y,z \rangle &
\langle s_1+3, 5s_2+7 \rangle \\

(10) &
\langle x,y,z , s_1+4, 5s_2+7 \rangle &
\langle x,y,z \rangle &
\langle s_1+4, 5s_2+7 \rangle \\

(11) &
\langle x,y,z , s_1+5, 5s_2+7 \rangle &
\langle x,y,z \rangle &
\langle s_1+5, 5s_2+7 \rangle \\

(12) &
\langle x,y,z , s_1+2, 5s_2+8 \rangle &
\langle x,y,z \rangle &
\langle s_1+2, 5s_2+8 \rangle
\end{array}
\]

We get that $\B_{glob}(f)$ and $\B_{loc,0}(f)$ are equal and generated
by these three elements:

$(s_1+1)(s_2+1)^2 (5s_2+2) (5s_2+3) (5s_2+4) (5s_2+6)
(s_1+2) (s_1+3) (s_1+4) (s_1+5)$,

$(s_1+1)(s_2+1)^2 (5s_2+2) (5s_2+3) (5s_2+4) (5s_2+6)
(5s_2+7) (s_1+2)$,

$(s_1+1)(s_2+1)^2 (5s_2+2) (5s_2+3) (5s_2+4) (5s_2+6)
(5s_2+7) (5s_2+8)$.

\subsection{Validity of the computations over $\C$}

First, let us state some general results useful in the sequel.

Let $\k \subseteq \K$ be two fields of characteristic zero.

\begin{lem}\label{lem:ex_elimin}
Let $J$ be an ideal in $\k[y,z]=\k[y_1,
\ldots, y_q, z_1, \ldots, z_r]$ then
\[
(\K[y,z] \cdot J) \cap \K[y] = \K[y] \cdot (J \cap \k[y]).
\]
\end{lem}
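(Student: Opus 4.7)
The inclusion $\K[y] \cdot (J \cap \k[y]) \subseteq (\K[y,z] \cdot J) \cap \K[y]$ is immediate from the definitions, since any element of $J \cap \k[y]$ sits both in $J$ (hence in $\K[y,z]\cdot J$) and in $\k[y] \subseteq \K[y]$. So the whole content is the reverse inclusion.

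My plan is to prove the non-trivial inclusion by an elementary basis argument, exploiting that $\k[y,z]$ and $\k[y]$ are free $\k$-modules. Fix a $\k$-basis $\{e_\alpha\}_{\alpha \in A}$ of $\K$ containing $1$. Then the extension $\K[y,z] = \k[y,z] \otimes_\k \K$ is free as a $\k[y,z]$-module with basis $\{e_\alpha\}$, and the natural inclusion $\K[y] \hookrightarrow \K[y,z]$ identifies $\K[y]$ with the $\k[y]$-submodule $\bigoplus_\alpha e_\alpha \cdot \k[y]$. In particular, every $F \in \K[y,z]$ has a unique expansion $F = \sum_\alpha e_\alpha F_\alpha$ with $F_\alpha \in \k[y,z]$, and $F$ lies in $\K[y]$ if and only if each $F_\alpha$ lies in $\k[y]$.

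Now take $f \in (\K[y,z]\cdot J) \cap \K[y]$ and write $f = \sum_{k=1}^N h_k g_k$ with $g_k \in J$ and $h_k \in \K[y,z]$. Expanding each coefficient as $h_k = \sum_\alpha e_\alpha h_{k,\alpha}$ with $h_{k,\alpha} \in \k[y,z]$ (a finite sum), I obtain
\[
f = \sum_\alpha e_\alpha f_\alpha, \qquad \text{where } f_\alpha := \sum_{k=1}^N h_{k,\alpha} g_k \in J.
\]
By the uniqueness of the basis expansion and the fact that $f \in \K[y]$, each $f_\alpha$ must lie in $\k[y]$. Hence $f_\alpha \in J \cap \k[y]$ for every $\alpha$, and therefore $f = \sum_\alpha e_\alpha f_\alpha \in \K[y] \cdot (J \cap \k[y])$, which is the desired inclusion.

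There is no genuinely hard step here: the only point that deserves care is the uniqueness of the basis decomposition, i.e.\ the observation that the isomorphism $\K \otimes_\k \k[y,z] \simeq \K[y,z]$ is compatible with the sub-ring $\K \otimes_\k \k[y] \simeq \K[y]$, so that $\k$-linear independence of the $e_\alpha$'s over $\k[y,z]$ forces $f_\alpha \in \k[y]$ as soon as $f \in \K[y]$. One could equivalently phrase the same argument using flatness of $\K$ over $\k$ by writing $J\cap\k[y]$ as the kernel of $\k[y] \to \k[y,z]/J$ and tensoring with $\K$, but the basis proof is self-contained and gives the result without any machinery.
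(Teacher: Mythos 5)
Your proof is correct and takes essentially the same route as the paper: write an element of $(\K[y,z]\cdot J)\cap\K[y]$ as a $\K[y,z]$-combination of generators of $J$, expand the coefficients over a $\k$-basis, and use the freeness/uniqueness of the expansion to force the $\k[y,z]$-components into $J\cap\k[y]$. The only cosmetic difference is that the paper picks a (finite) $\k$-basis of the subspace spanned by the coefficients actually occurring, while you fix a basis of all of $\K$ over $\k$; both make the same use of $\k$-linear independence surviving base change to $\k[y,z]$.
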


\begin{proof}
Let us consider $g \in (\K[y,z] \cdot J) \cap \K[y]$.
Let $f_1, \ldots, f_s$ be a system of generators of $J$
and let us write $g=\sum_j u_j f_j$ with $u_j \in \K[y,z]$.
Let $e_l$, $l\in L$, be a basis of the $\k$-linear space
generated by all the coefficients of the $u_j$'s.
So one can write $g \in \oplus_l J e_l \subset \oplus_l
\k[x,y] e_l$. Since $g \in \K[y]$, we obtain
$g\in \oplus (J \cap \k[y]) e_l$.
The left-right inclusion is proven. The other one being
trivial, the proof is complete.
\end{proof}

\begin{lem}\label{lem:ex_primarity-K-k}
Let $J$ be an ideal in $\k[x]=\k[x_1,\ldots,x_n]$.
\begin{enumerate}
\item
$(\K[x] J) \cap \k[x]=J$.
\item
If $\K[x] J$ is primary in $\K[x]$ then $J$
is primary in $\k[x]$.
\end{enumerate}
\end{lem}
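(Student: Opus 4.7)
The plan is to exploit the fact that $\K$ is a free $\k$-module, so $\K[x]$ is a free (in particular faithfully flat) $\k[x]$-module. Concretely, I would pick a $\k$-basis $(e_l)_{l \in L}$ of $\K$ such that $1 = e_{l_0}$ for some index $l_0$. Then we get the direct-sum decomposition $\K[x] = \bigoplus_{l \in L} \k[x] e_l$ as a $\k[x]$-module, which in turn yields $\K[x] \cdot J = \bigoplus_{l \in L} J \cdot e_l$. This is the same basis trick already used in the proof of Lemma \ref{lem:ex_elimin}, and it is the technical engine for both parts.

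For part (1), take any $g \in (\K[x] \cdot J) \cap \k[x]$. Viewed in the decomposition $\bigoplus_l J e_l$, the element $g$ has each component in $J$. On the other hand, viewed as an element of $\k[x] = \k[x] \cdot e_{l_0}$, it has only the $e_{l_0}$-component non-zero, and that component is $g$ itself. By uniqueness of the expansion in the basis $(e_l)$, we conclude $g \in J$. The reverse inclusion $J \subseteq (\K[x] \cdot J) \cap \k[x]$ is immediate.

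For part (2), suppose $\K[x] \cdot J$ is primary in $\K[x]$, and let $f, g \in \k[x]$ with $fg \in J$ and $f \notin J$. By part (1), $f \notin \K[x] \cdot J$. Since $fg \in J \subseteq \K[x] \cdot J$ and $\K[x] \cdot J$ is primary in $\K[x]$, there exists $n \geq 1$ with $g^n \in \K[x] \cdot J$. Applying part (1) once more gives $g^n \in (\K[x] \cdot J) \cap \k[x] = J$. Hence $J$ is primary in $\k[x]$.

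There isn't a real obstacle here: the only thing to be a bit careful about is ensuring that $1$ is an element of the chosen basis of $\K$ over $\k$ (so that $\k[x]$ sits as a direct summand of $\K[x]$), which is harmless since any linearly independent set, starting with $\{1\}$, can be extended to a basis. Everything else is a direct application of faithful flatness and the definition of a primary ideal.
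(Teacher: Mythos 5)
Your proof is correct and takes essentially the same route as the paper: the paper's proof of part (1) explicitly invokes the basis decomposition argument from Lemma~\ref{lem:ex_elimin}, which is exactly the $\K[x]=\bigoplus_l \k[x]e_l$ trick you spell out (with the mild refinement of insisting $1$ be one of the basis vectors so that $\k[x]$ sits as a direct summand), and your derivation of (2) from (1) matches the paper's remark that it is a direct consequence of the definitions.
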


\begin{proof}
The right-left inclusion of (1) is trivial while the
left-right one can be proven by arguments similar to those
used for proving lemma \ref{lem:ex_elimin}.
Claim (2) is a direct consequence of the definitions and (1).
\end{proof}

We shall denote by $\Gal$ the Galois group of the extension
$\K / \k$. If $\tau \in \Gal$ then we shall also denote
by $\tau$ the induced ring automorphism of
$\K[x]=\K[x_1,\ldots,x_n]$.

\begin{lem}\label{lem:ex_Galois_fixed}
Assume that $\K / \k$ is a Galois extension.
Let $J \subset \K[x]=\K[x_1,\ldots,x_n]$ be an ideal.
Suppose that for any $\tau \in \Gal$, $\tau(J) \subset J$.
Then there exists an ideal $J_0 \subset \k[x]$
such that $J=\K[x] J_0$.
\end{lem}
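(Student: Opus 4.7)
The natural candidate is $J_0 := J \cap \k[x]$, and the nontrivial task is to show $J \subset \K[x] \cdot J_0$ (the reverse inclusion is immediate). I would first observe that the hypothesis $\tau(J) \subset J$ for every $\tau \in \gal(\K/\k)$ upgrades to $\tau(J) = J$, because $\tau^{-1} \in \gal(\K/\k)$ gives $J \subset \tau(J)$.

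The heart of the argument is the finite Galois case, so the plan is to reduce to it. When $[\K:\k] = d < \infty$, choose a $\k$-basis $e_1,\ldots,e_d$ of $\K$ and, given $g \in J$, write uniquely $g = \sum_{i=1}^d p_i e_i$ with $p_i \in \k[x]$. For each $\tau \in \gal(\K/\k)$, $\tau(g) = \sum_i p_i \tau(e_i)$ lies in $\tau(J) = J$. Enumerate $\gal(\K/\k) = \{\tau_1,\ldots,\tau_d\}$ and consider the $d \times d$ matrix $M = (\tau_j(e_i))_{j,i}$ over $\K$. This is the matrix giving the trace pairing of the basis $(e_i)$, hence $\det M \neq 0$ by separability of $\K/\k$. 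Inverting $M$ expresses each $p_i$ as a $\K$-linear combination of the elements $\tau_j(g)$, all of which lie in $J$; since also $p_i \in \k[x]$, we conclude $p_i \in J \cap \k[x] = J_0$, so $g \in \K[x] \cdot J_0$.

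For an infinite Galois extension $\K/\k$, I would write $\K = \bigcup_\alpha \K_\alpha$ as the union of its finite Galois subextensions over $\k$. Given $g \in J$, there is an $\alpha$ with $g \in \K_\alpha[x]$. Set $J^{(\alpha)} := J \cap \K_\alpha[x]$; any $\sigma \in \gal(\K_\alpha/\k)$ extends to some $\tilde\sigma \in \gal(\K/\k)$, and $\tilde\sigma$ preserves both $J$ and $\K_\alpha[x]$ (the latter because $\K_\alpha/\k$ is Galois), so $\sigma(J^{(\alpha)}) = J^{(\alpha)}$. Applying the finite case to $J^{(\alpha)} \subset \K_\alpha[x]$ yields $g \in \K_\alpha[x] \cdot (J^{(\alpha)} \cap \k[x]) \subset \K[x] \cdot J_0$, finishing the proof.

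The only genuine difficulty is justifying the invertibility of the matrix $M$ in the finite step; everything else is bookkeeping. This follows from nondegeneracy of the trace form (equivalently, Dedekind's linear independence of the characters $\tau_1,\ldots,\tau_d \colon \K \to \K$), a standard consequence of separability, and I expect no obstacle beyond invoking this fact cleanly.
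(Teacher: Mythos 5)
Your proof is correct, but it takes a genuinely different route from the paper. The paper's argument is a short Gr\"obner basis observation: take the reduced Gr\"obner basis $G$ of $J$ for a fixed monomial order; since each $\tau \in \gal(\K/\k)$ fixes monomials and sends $J$ to $J$, $\tau(G)$ is again the reduced Gr\"obner basis of $J$, and matching leading monomials forces $\tau(g)=g$ for every $g\in G$; hence every $g \in G$ has all coefficients in the fixed field $\k$, so $G \subset \k[x]$ and $J_0 = \langle G \rangle_{\k[x]}$ works. This is uniform in $[\K:\k]$ --- no case distinction between finite and infinite Galois extensions is needed. Your argument instead identifies $J_0 = J \cap \k[x]$, handles the finite case by inverting the matrix $M = (\tau_j(e_i))$ (whose invertibility you correctly deduce from nondegeneracy of the trace form, or equivalently Dedekind's independence of characters; note $M$ itself is not the trace Gram matrix but satisfies $M^{\mathsf T}M = (\mathrm{Tr}(e_ie_k))$, so $\det M \neq 0$), and then passes to an infinite extension by writing $\K$ as a directed union of finite Galois subextensions. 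Both approaches are sound. What the paper's Gr\"obner argument buys is brevity and the avoidance of the infinite-to-finite reduction; what yours buys is the explicit description $J_0 = J \cap \k[x]$ and independence from any Gr\"obner machinery, which some readers would find more elementary. One cosmetic point: in the finite step you should spell out that the inverted relation $p_i = \sum_j (M^{-1})_{ij}\tau_j(g)$ shows $p_i \in J$ because $J$ is closed under $\K$-linear combinations; you do use this implicitly, and it is where the ideal structure of $J$ enters.
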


\begin{proof}
Given $\tau \in \Gal$, we have $\tau^{-1}(J) \subset J$ from
which we deduce that $\tau(J)=J$.

Let $G$ be the reduced Gr\"obner basis of $J$ with
respect to a fixed well-order. In view of Buchberger's criterion,
we get that $\tau(G)$ is also the reduced Gr\"obner
basis of $J$ for any $\tau\in \Gal$.
Therefore, for any $g \in G$, $\tau(g)=g$.
Thus, $\tau$ fixes each coefficient of $g$. Since $\tau$ is
arbitrary and the extension $\K/\k$ is Galois, we get
$g \in \k[x]$.
\end{proof}

\begin{prop}\label{prop:ex_main}
Let $\Upsilon$ be a primary ideal of $\k[x]= \k[x_1,\dots,x_n]$ 
and $\K$ be a field containing the algebraic closure of $\k$.
If the radical of $\K[x]\Upsilon$ is a prime ideal of $\K[x]$, 
then $\K[x]\Upsilon$ is a primary ideal of $\K[x]$.
\end{prop}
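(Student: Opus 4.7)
The plan is to show that $\K[x]\Up$ has a unique associated prime, namely the extension of $\mathfrak{p}=\sqrt{\Up}$, by first upgrading the hypothesis ``$\sqrt{\K[x]\Up}$ is prime'' to the stronger statement ``$\K[x]\mathfrak{p}$ is prime'', and then applying the formula for associated primes under a flat base change.

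First I would set $\mathfrak{p}=\sqrt{\Up}$, the unique associated prime of $\Up$ in $\k[x]$. Since $\Up$ is $\mathfrak{p}$-primary, there is some $m\ge 1$ with $\mathfrak{p}^m \subseteq \Up \subseteq \mathfrak{p}$; extending scalars and taking radicals yields $\sqrt{\K[x]\Up}=\sqrt{\K[x]\mathfrak{p}}$. The hypothesis therefore reads: $\sqrt{\K[x]\mathfrak{p}}$ is prime in $\K[x]$.

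The key step is to upgrade this to: $\K[x]\mathfrak{p}$ is itself prime. The ring $\k[x]/\mathfrak{p}$ is a domain, hence a reduced finitely generated algebra over the characteristic~$0$ (in particular perfect) field $\k$. By the classical fact that a reduced algebra over a perfect field is geometrically reduced, the tensor product $\K \otimes_\k (\k[x]/\mathfrak{p}) \simeq \K[x]/\K[x]\mathfrak{p}$ is reduced, so $\K[x]\mathfrak{p}=\sqrt{\K[x]\mathfrak{p}}$. Combined with the hypothesis, $\K[x]\mathfrak{p}$ is prime.

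Finally I would invoke the flat base change formula for associated primes (Bourbaki, Alg.\ Comm., Ch.~IV, \S 2.6, Prop.~10) applied to $\k[x]\hookrightarrow \K[x]$:
\[
\mathrm{Ass}_{\K[x]}\bigl(\K[x]/\K[x]\Up\bigr)
= \bigcup_{\mathfrak{q} \in \mathrm{Ass}_{\k[x]}(\k[x]/\Up)}
\mathrm{Ass}_{\K[x]}\bigl(\K[x]/\K[x]\mathfrak{q}\bigr).
\]
Since $\Up$ is $\mathfrak{p}$-primary, the left index set is $\{\mathfrak{p}\}$, and since $\K[x]\mathfrak{p}$ is prime, the right-hand side equals $\{\K[x]\mathfrak{p}\}$. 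Hence $\K[x]\Up$ has a single associated prime and is therefore $\K[x]\mathfrak{p}$-primary. The main obstacle is the geometric reducedness step: without it, the hypothesis would only yield a unique minimal prime over $\K[x]\Up$, and embedded primes could still occur. Note that the assumption $\K\supseteq\bar{\k}$ is not logically required for this approach; the characteristic $0$ hypothesis on $\k$ suffices.
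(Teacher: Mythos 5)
Your proof is correct, and it takes a genuinely different route from the paper's. You reduce the statement to two standard facts: (a) since $\k$ has characteristic $0$ (hence is perfect), the domain $\k[x]/\mathfrak{p}$ is geometrically reduced, so $\K[x]\mathfrak{p}$ is a radical ideal and, by the hypothesis, actually prime; and (b) the flat base-change formula for associated primes, which then collapses $\mathrm{Ass}_{\K[x]}(\K[x]/\K[x]\Up)$ to the single prime $\K[x]\mathfrak{p}$. The verification that $\sqrt{\K[x]\Up}=\sqrt{\K[x]\mathfrak{p}}$ via $\mathfrak{p}^m\subseteq\Up\subseteq\mathfrak{p}$ is exactly what is needed to transfer the hypothesis from $\Up$ to $\mathfrak{p}$, and your observation that $\K\supseteq\bar\k$ can be dropped (only perfectness of $\k$ is used) is accurate. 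By contrast, the paper argues by contradiction: it takes an irredundant primary decomposition $\K[x]\Up=\bigcap_j\Up_j^\K$ over $\K$, uses that the components can be taken defined over a finite Galois extension $\K'/\k$, applies the Galois action and the uniqueness of non-embedded components to descend $\Up_0^\K$ to a primary ideal $\Up_0\subset\k[x]$, forms for each embedded component its Galois-stable intersection $\bigcap_\tau\tau(\Up_j^\K)$ to get further primary ideals $\Up_j\subset\k[x]$, and finally derives a contradiction from a dimension comparison with the irredundancy of the original decomposition. The paper's argument is more constructive and self-contained (it uses only Gröbner bases and Galois descent, in keeping with the algorithmic tone of the section), while yours is shorter and more conceptual, at the cost of invoking the Bourbaki/Matsumura theorem on associated primes under flat base change and the notion of geometric reducedness over a perfect field.
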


\begin{proof}
Take an irredundant primary decomposition 
\begin{equation}\label{eq:minimal}
 \K[x]\Up = \bigcap_{j=0}^m \Up^{\K}_j
\end{equation}
in $\K[x]$. We assume by contradiction that $m\ge 1$.

The algorithms for computing a primary decomposition imply
that there exists a finite Galois extension $\K'$ of $\k$
such that $\Upsilon^{\K}_j$ are defined over $\K'$
(see e.g. \cite[Chap. 4]{singular}).
Then the field extension from $\K'$ to $\K$ is trivial
in the sense that the primarity of each component is preserved.
Thus we may now assume $\K' = \K$.

Since the radicals $\sqrt{\Upsilon^{\K}_j}$ are distinct
and $\sqrt{\K[x]\Upsilon}$ is prime, we may assume,
without loss of generality that
$\sqrt{\Upsilon^{\K}_0} = \sqrt{\K[x]\Upsilon}$ and
the dimension of $\sqrt{\Upsilon^{\K}_j}$ is less than that of
$\sqrt{\K[x]\Upsilon}$ for $j = 1,\dots,m$.

Let $\tau$ be an element of the Galois group
$\mathrm{Gal}(\K/\k)$. Then
\begin{equation}\label{eq:tau}
 \K[x]\Upsilon = \bigcap_{j=0}^m \tau(\Upsilon^{\K}_j)
\end{equation}
is also an irredundant primary decomposition.
Since the non-embedded primary components are unique, 
we have $\tau(\Upsilon^{\K}_0) = \Upsilon^{\K}_0$.  
Since $\tau \in \Gal$ is arbitrary,
this implies, by lemma \ref{lem:ex_Galois_fixed},
that $\Upsilon^{\K}_0$ is defined over $\k$, 
i.e., there is an ideal $\Upsilon_0$ of $\k[x]$ 
such that $\Upsilon^{\K}_0 = \K[x]\Upsilon_0$.
By lemma \ref{lem:ex_primarity-K-k}(2), $\Up_0$ is
primary in $\k[x]$.

For each $j = 1,\dots,m$, by lemma \ref{lem:ex_Galois_fixed},
there exists an ideal $\Upsilon_j$ of 
$\k[x]$ such that
\begin{equation}\label{eq:Gal}
\K[x]\Upsilon_j = \bigcap_{\tau \in \mathrm{Gal}(\K/\k)} 
\tau(\Upsilon^{\K}_j).
\end{equation}
Moreover, $\Upsilon_j$ is primary in $\k[x]$.
Indeed, assume $f,g \in \k[x]$ satisfy $fg \in \Upsilon_j$ 
and $f \not\in \Upsilon_j$.  Then there exists $\tau \in \Gal$
such that $f \not\in \tau(\Upsilon^{\K}_j)$.  
Since $f$ is fixed by every element of $\Gal$, it follows that
$f \notin \tau(\Upsilon^{\K}_j)$ for any
$\tau \in \mathrm{Gal}(\K/\k)$.
In particular, $f \notin \Up_j^\K$. Hence there exists
an integer $\nu$ such that $g^\nu \in \Up_j^\K$. Since
$g^\nu=\tau(g^\nu) \in \tau(\Up_j^\K)$ for any $\tau \in \Gal$,
we have $g^\nu \in \K[x]\Up_j$, which implies, by
lemma~\ref{lem:ex_primarity-K-k}(1), $g^\nu \in \Upsilon_j$.

Combining equalities (\ref{eq:Gal}) and (\ref{eq:tau}), we get
\[
\K[x] \Up= \bigcap_{j=0}^m (\K[x] \Up_j).
\]
Using lemma \ref{lem:ex_primarity-K-k}(1), we obtain a
(not necessarily irredundant) primary decomposition in $\k[x]$:
\[
 \Upsilon = \bigcap_{j=0}^m \Upsilon_j.
\]

Since $\Up$ is primary and $\dim(\Up)=\dim(\Up_0)>
\dim(\Up_j)$ for $j\ge 1$, the uniqueness of the number of
components in irredundant primary decomposition implies that
$\Up_0 \subset \Up_j$ for $j=1, \ldots, m$. Hence we get
\[
\Up^\K_0 = \K[x] \Up_0 \subset \K[x] \Up_j=
\bigcap_{\tau \in \Gal} \tau(\Up_j^\K) \subset \Up_j^\K
\]
for $j=1, \ldots, m$. This contradicts the irredundancy
of (\ref{eq:minimal}).
\end{proof}

Let us return to our problem. We consider
the general situation: $f\in \Q[x]^p$ with $x=(x_1,\ldots,x_n)$.
The ideals $I$, $I_1$, $I_2$ introduced after lemma \ref{lem}
are defined over $\Q$.
Considering $f$ in $\C[x]^p$ one can define the ideals
$I^\C$, $I_1^\C$ and $I_2^\C$ over $\C$.

\begin{lem} \label{lem:ex_QvsC}\
\begin{enumerate}
\item
$I^\C= D_{\C[x]} \langle t, \ddt \rangle \cdot I$,
\item
$I_1^\C = D_{\C[x]} [s] \cdot I_1$,
\item
$I_2^\C = \C[x,s] \cdot I_2$.
\end{enumerate}
\end{lem}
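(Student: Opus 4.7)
My plan is to deduce part (1) directly from Lemma~\ref{lem}, and then to leverage the fact that $\C$ is a free (hence flat) $\Q$-module to pass parts (2) and (3) through the intersections defining $I_1$ and $I_2$. First, for part (1), by Lemma~\ref{lem} both $I$ and $I^\C$ are generated by the explicit elements listed in (\ref{eq:I}), namely $t_j-f_j$ and $\dx{i}+\sum_j(\partial f_j/\partial x_i)\dt{j}$. Because $f \in \Q[x]^p$, these generators lie in $D\langle t,\ddt\rangle$, hence in $I$. Since these same elements generate $I^\C$ over $D_{\C[x]}\langle t,\ddt\rangle$, we immediately get $I^\C \subseteq D_{\C[x]}\langle t,\ddt\rangle\cdot I$, and the reverse inclusion is trivial because $I \subseteq I^\C$.

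For parts (2) and (3), I would fix a $\Q$-basis $\{e_\lambda\}_{\lambda\in\Lambda}$ of $\C$. Every element of $D_{\C[x]}\langle t,\ddt\rangle$ (resp.\ of $D_{\C[x]}[s]$, resp.\ of $\C[x,s]$) admits a unique finite decomposition $\sum_\lambda e_\lambda P_\lambda$ with $P_\lambda$ in the corresponding $\Q$-version of the ring; this uses that $D\langle t,\ddt\rangle$, $D[s]$ and $\Q[x,s]$ are free $\Q$-modules with bases of monomials. For (2), take $P \in I_1^\C = I^\C \cap D_{\C[x]}[s]$. Using (1), write $P=\sum_k Q_k G_k$ with each $G_k$ one of the generators from (\ref{eq:I}) (which lie in $D\langle t,\ddt\rangle$) and $Q_k\in D_{\C[x]}\langle t,\ddt\rangle$. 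Decomposing $Q_k=\sum_\lambda e_\lambda Q_{k,\lambda}$ with $Q_{k,\lambda}\in D\langle t,\ddt\rangle$ and regrouping yields $P=\sum_\lambda e_\lambda\bigl(\sum_k Q_{k,\lambda}G_k\bigr)$. By uniqueness of the basis expansion, and since $P\in D_{\C[x]}[s]$ gives $P=\sum_\lambda e_\lambda P_\lambda$ with $P_\lambda\in D[s]$, we conclude $P_\lambda=\sum_k Q_{k,\lambda}G_k \in I\cap D[s]=I_1$, so $P\in D_{\C[x]}[s]\cdot I_1$. The reverse inclusion is immediate.

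Part (3) follows by the same basis-decomposition trick applied to $I_2^\C=(I_1^\C+D_{\C[x]}[s]\cdot F)\cap\C[x,s]$: using (2), an element $P$ of this intersection can be written as $P=Q+R\cdot F$ with $Q\in I_1^\C=D_{\C[x]}[s]\cdot I_1$ and $R\in D_{\C[x]}[s]$; decomposing $Q$ and $R$ along the $e_\lambda$'s and noting that $F\in\Q[x]$, each component $P_\lambda$ lies in $(I_1+D[s]F)\cap\Q[x,s]=I_2$. The main (minor) obstacle is keeping the non-commutative bookkeeping straight when commuting $e_\lambda$ past the generators $G_k$; this is harmless because the $e_\lambda\in\C$ are central in $D_{\C[x]}\langle t,\ddt\rangle$, so the decompositions behave as in the commutative case. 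No real conceptual difficulty arises beyond Lemma~\ref{lem} and the freeness of $\C$ as a $\Q$-module.
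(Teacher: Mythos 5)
Your proof is correct and takes essentially the same route as the paper: part (1) follows directly from Lemma~\ref{lem}, and parts (2) and (3) are handled by a field-extension basis decomposition — the paper simply says these ``are proven exactly as Lemma~\ref{lem:ex_elimin}'', whose proof is precisely the decomposition argument you spell out. The only cosmetic difference is that you fix a full $\Q$-basis of $\C$, whereas the paper's Lemma~\ref{lem:ex_elimin} takes a finite basis of the $\k$-span of the (finitely many) coefficients that actually appear; and you make explicit the (harmless) non-commutative bookkeeping via centrality of $\C$-scalars, which the paper leaves implicit.
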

Thus the construction is the same over $\Q$ and over $\C$
until step 2.

\begin{proof}
Statement (1) is a consequence of lemma \ref{lem}.
Statements (2) and (3) are proven exactly as
lemma~\ref{lem:ex_elimin}.
\end{proof}

Let $I_2=\Up_1 \cap \cdots \cap \Up_r$ be a
primary decomposition of $I_2$ (over $\Q$).

\begin{clai*}
In examples 1 to 5, the obtained primary decomposition
of $I_2$ is primary over $\C$.
\end{clai*}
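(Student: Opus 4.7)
The plan is to verify the claim example by example via Proposition~\ref{prop:ex_main}, applied with $\k=\Q$ and $\K=\C$. As a preliminary step, I would use Lemma~\ref{lem:ex_QvsC}(3) together with the flatness of $\C[x,s]$ over $\Q[x,s]$ (which guarantees that extension of scalars commutes with finite intersections of ideals) to rewrite
\[
I_2^\C \;=\; \C[x,s]\cdot I_2 \;=\; \C[x,s]\cdot\bigcap_{i=1}^r \Up_i \;=\; \bigcap_{i=1}^r \bigl(\C[x,s]\cdot \Up_i\bigr).
\]
It then suffices to show that each $\C[x,s]\cdot \Up_i$ is primary in $\C[x,s]$.

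Since $\C$ contains the algebraic closure of $\Q$, Proposition~\ref{prop:ex_main} reduces this to checking that $\sqrt{\C[x,s]\cdot \Up_i}$ is prime. A standard argument, using $(\sqrt{\Up_i})^N \subset \Up_i$ for large $N$ and the trivial inclusion $(\C[x,s]\sqrt{\Up_i})^N \subset \C[x,s]\cdot(\sqrt{\Up_i})^N$, yields $\sqrt{\C[x,s]\cdot \Up_i} = \sqrt{\C[x,s]\cdot \sqrt{\Up_i}}$, so it is enough to prove that each of the radicals $\sqrt{\Up_i}$ listed in the tables \emph{remains prime} after base change from $\Q$ to $\C$.

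The majority of these radicals are generated by polynomials that are linear in disjoint subsets of the variables $x,y,z,s_1,\ldots,s_p$; for such ideals the quotient ring is visibly a polynomial ring over $\C$, and primality is immediate by inspection of the tables. The genuinely substantive verifications concern exactly four ideals: the components $\langle x^3+y^2,\,s_1+1\rangle$ and $\langle x^2+y^3,\,s_2+1\rangle$ of Example~3, the component $\langle f_2,\,s_2+1\rangle$ of Example~4 with $f_2 = x^4+y^4+2zx^2y^2$, and the component $\langle f_2,\,s_2+1\rangle$ of Example~5 with $f_2 = x^5+y^5+zx^2y^3$. In each case the generator $s_j+1$ is linear and involves a variable absent from the other generator, so primality reduces to absolute irreducibility of the remaining polynomial.

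The cuspidal curves $x^3+y^2$ and $x^2+y^3$ are classically irreducible over $\C$. For the two $f_2$'s, I would invoke Gauss's lemma: viewed as a polynomial in $z$ over $\C[x,y]$, each is of degree one with nonzero leading coefficient ($2x^2y^2$, resp.\ $x^2y^3$), hence irreducible in $\C(x,y)[z]$; it remains to check that the content in $\C[x,y]$ is a unit, i.e.\ $\gcd(x^4+y^4,\,2x^2y^2)=1$ and $\gcd(x^5+y^5,\,x^2y^3)=1$, which is clear since $x^4+y^4$ and $x^5+y^5$ split into distinct linear forms in $\C[x,y]$ and in particular admit no factor of $x$ or $y$. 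This absolute-irreducibility check is essentially the only computational content of the argument; everything else is inspection of the tables together with the reduction outlined above, and the main (mild) obstacle is simply making sure no nontrivial component has been overlooked.
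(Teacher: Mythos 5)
Your approach is the same as the paper's: reduce via Proposition~\ref{prop:ex_main} to checking primality of $\sqrt{\C[x,s]\cdot\Up_i}$, observe that most radicals are linear, and handle the remaining ones by absolute irreducibility of a single $f_j$ using the $z$-degree (or degree-in-one-variable) argument. You are, if anything, slightly more explicit than the paper in justifying that base change commutes with the intersection and that $\sqrt{\C[x,s]\Up_i}=\sqrt{\C[x,s]\sqrt{\Up_i}}$, both of which the paper treats tacitly. One small oversight: the component $\langle s_3+1,\,y-x^2\rangle$ of Example~2 is neither linear in disjoint subsets of the variables nor on your list of ``genuinely substantive'' cases; of course $y-x^2$ is monic of degree~$1$ in $y$ and so is absolutely irreducible, which disposes of it, but your stated dichotomy does not actually cover it, so it should be added to the list (or the dichotomy should be restated as the paper does, namely ``generated by degree-one polynomials'' versus ``of the form $\langle s_j+1,f_j\rangle$'').
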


Assume for the moment that this claim is proven, i.e.
the decomposition
$\C[x,s]  I_2= \bigcap_{i=1}^r (\C[x,s]  \Up_i)$ is
primary. Then, applying Proposition \ref{p3}, we get
the following conclusion: for any $\alpha \in \C^n$,
\begin{eqnarray*}
\B_{loc,\alpha}(f) & = & \bigcap_{i \in \sigma_\alpha}
  ((\C[x,s] \Up_i) \cap \C[s])\\
& = & \C[s] \cdot (\bigcap_{i \in \sigma_\alpha} (\Up_i \cap \Q[s]))
  \text{ by lemma } \ref{lem:ex_elimin},
\end{eqnarray*}
where $\sigma_\alpha=\{i \ | \ 1 \le i \le r, \alpha \in
V((\C[x,s] \Up_i) \cap \C[x])\}$. Notice that by
lemma~\ref{lem:ex_elimin}, $(\C[x,s] \Up_i) \cap \C[x]=
\C[x]( \Up_i \cap \Q[x] )$.

Consequently the computations made over $\Q$ shall be valid
over $\C$. Thus to end this section, it remains to prove the
claim.

\begin{proof}[Proof of the claim]
Let $\Up=\Up_i$ be a primary component (over $\Q$) of the
computed decomposition of $I_2$.
By Proposition \ref{prop:ex_main}, it is enough to prove that
$\C[x,s] \sqrt{\Up}$ is prime.

In all the examples, $\sqrt{\Up}$
is either generated by polynomials of degree $1$
(i.e. defines an intersection of affine hyperplanes)
or of the form $\langle s_j+1, f_j \rangle$.
In the first case, $\C[x,s] \sqrt{\Up}$ is obviously prime.
It remains to analyse the second case. 

Assume for simplicity that $j=1$, then we have a ring isomorphism
$\C[x,s] / \langle s_1+1, f_1 \rangle \simeq
\C[x,s_2, \ldots,s_p]/ \langle f_1 \rangle$.
This relation shows that it is enough to prove that each
$f_j$ is irreducible over $\C$.

It is obviously the case in Ex. 1, 2 and 3 and for
$f_1$ in Ex. 4 and Ex. 5. Let us check
that $f_2$ in Ex. 4 is irreducible over $\C$. Suppose
that $f_2$ is a product of two polynomials. Regarding the
$z$-degree, we necessarily have a product of the form:
\[x^4 + y^4 +2 z x^2 y^2= u \cdot (v z + w),\]
where $u,v,w \in \C[x,y]$ but this is possible only if $u$ is
a constant.
Thus $f_2$ is irreducible. A similar proof shows
that $f_2$ in Ex. 5 is also irreducible over $\C$.
The proof is complete.
\end{proof}

\end{document}